\documentclass[11pt]{article}

\setlength{\pdfpageheight}{\paperheight}
\setlength{\pdfpagewidth}{\paperwidth}

\usepackage{amsmath, amssymb, amsthm, amstext, graphicx,tikz, url}
\usepackage{enumerate}
\usepackage{fullpage}

\newtheorem{theorem}{Theorem}[section]
\newtheorem{lemma}[theorem]{Lemma}

\newtheorem{definition}[theorem]{Definition}
\newtheorem{corollary}[theorem]{Corollary}

\newcommand{\into}{\rightarrow}

\newcommand{\st}{\mathrel{:}}

\newcommand{\R}{\mathbb{R}}
\newcommand{\Erdos}{Erd\H{o}s }
\newcommand{\Erdosns}{Erd\H{o}s}

\makeatletter
\def\blfootnote{\xdef\@thefnmark{}\@footnotetext}
\makeatother

\begin{document}

\title{Distinct Volume Subsets via Indiscernibles}
\author{
{William Gasarch}
\thanks{
Department of Computer Science,
       University of Maryland at College Park, College Park, MD 20742.
Email: \texttt{gasarch@cs.umd.edu}.
}
\and
{Douglas Ulrich}
\thanks{
Department of Mathematics,
        University of Maryland at College Park, College Park, MD 20742.
Email: \texttt{ds\_ulrich@hotmail.com}.
The author was partially supported by NSF Research Grant DMS-1308546.}
}

\blfootnote{2010 \emph{Mathematics Subject Classification:} 03E75, 03C98}

	\blfootnote{\emph{Key Words and Phrases:} Ramsey theory, distinct volumes, indiscernibles}

\date{\today}

\maketitle

\begin{abstract}
\Erdos proved that for every infinite $X \subseteq \mathbb{R}^d$ there is $Y \subseteq X$ with $|Y|=|X|$, such that all pairs of points from $Y$ have distinct distances, and he gave partial results for general $a$-ary volume. In this paper, we search for the strongest possible canonization results for $a$-ary volume, making use of general model-theoretic machinery. The main difficulty is for singular cardinals; to handle this case we prove the following. Suppose $T$ is a stable theory, $\Delta$ is a finite set of formulas of $T$, $M \models T$, and $X$ is an infinite subset of $M$. Then there is $Y \subseteq X$ with $|Y| = |X|$ and an equivalence relation $E$ on $Y$ with infinitely many classes, each class infinite, such that $Y$ is $(\Delta, E)$-indiscernible. We also consider the definable version of these problems, for example we assume $X \subseteq \mathbb{R}^d$ is perfect (in the topological sense) and we find some perfect $Y \subseteq X$ with all distances distinct. Finally we show that \Erdosns's theorem requires some use of the axiom of choice.
\end{abstract}

\section{Introduction}

In this paper we use the term 1-ary volume for length, 2-ary volume for area,
3-ary volume for volume. We may use the term volume when the dimension is
understood. Also, the natural number $n$ is identified with the set $\{0, \ldots, n-1\}$.

A set $X\subseteq \R^d$ is {\it $a$-rainbow} if all  
$a$-sets of points that yield nonzero volumes
have distinct volumes. Let $h_{a,d}(n)$ be the largest integer 
$t$ such that any set of $n$ points in $\R^d$ 
contains a rainbow subset of $t$.  
This function was studied by Conlon et. al~\cite{billdist}
which also includes references to past work.

In this paper, we are interested in the case where the cardinality of the set of points 
is some $\kappa$ with
$\aleph_0\le \kappa \le 2^{\aleph_0}$. \Erdos was the first to consider this in \cite{remarks}. Using different terminology, he proved the following:

\begin{theorem}\label{ErdosTheorem}
If $X \subseteq \mathbb{R}^d$ is infinite then there is a $2$-rainbow $Y \subseteq X$ with $|Y| = |X|$.
\end{theorem}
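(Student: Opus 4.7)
The plan is to split by cardinality $\kappa := |X|$ into three cases --- countable, uncountable regular, and singular --- unified by the theme that if no rainbow subset of full size exists, maximality forces $X$ into ever-lower-dimensional algebraic strata until the strata become too small to contain infinitely many points.

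For the countable case I would color $[X]^2$ by Euclidean distance and apply the \Erdos--Rado canonical Ramsey theorem. On the resulting infinite $Y_0$, with any fixed enumeration $\{y_n : n < \omega\}$, the coloring is either constant, a function of $\min$, a function of $\max$, or injective. The injective case is the conclusion. In each of the others, a ``nested spheres'' analysis --- e.g.\ in the min-canonical case, all $y_j$ with $j > k$ are forced onto a sphere around $y_k$, so the tail past index $k$ lies in the intersection of $k{+}1$ spheres with distinct centers --- shows that after $d$ intersections the ambient variety is zero-dimensional, hence finite, contradicting infinitude of the tail.

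For uncountable regular $\kappa$ I would take $Y \subseteq X$ maximal (by Zorn) with distinct pairwise distances. If $|Y| < \kappa$, every $x \in X \setminus Y$ must lie on either a perpendicular-bisector hyperplane of some pair from $Y$, or on a sphere $\{z : \|z - y\| = \|y_1 - y_2\|\}$ with $y, y_1, y_2 \in Y$, $y_1 \neq y_2$. The total count of these hypersurfaces is $\leq |Y|^3 < \kappa$, so regularity forces $\kappa$ points of $X$ onto a single hypersurface $H$. I would iterate the same maximality argument with $X \cap H$ in place of $X$, using that on a sphere the Euclidean distance is a strictly monotone function of intrinsic angular separation, so ``rainbow in $\R^d$'' lifts faithfully. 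After at most $d$ iterations the ambient variety is zero-dimensional, hence finite --- the desired contradiction.

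For singular $\kappa$ I would write $X = \bigcup_{i < \mathrm{cf}(\kappa)} X_i$ with $|X_i| = \kappa_i$ regular and $\sup_i \kappa_i = \kappa$, then recursively pick $Y_i \subseteq X_i$ rainbow of size $\kappa_i$ whose pairwise distances avoid the set $F_i$ of distances already realized in $\bigcup_{j < i} Y_j$; since $|F_i| < \kappa_i$, a minor strengthening of the regular-case argument (the forbidden distances contribute $|Y_i| \cdot |F_i|$ additional bad spheres, still fewer than $\kappa_i$) produces such a $Y_i$, and $Y := \bigcup_i Y_i$ is rainbow of size $\kappa$. The main obstacle I expect is the algebraic bookkeeping of the regular-case cascade: verifying that at each stage the family of bad sets genuinely cuts out a codimension-one subvariety of the current ambient stratum, and that the Euclidean-vs-intrinsic monotonicity on spheres passes correctly through iterated spherical restrictions so that the induction is formally legal.
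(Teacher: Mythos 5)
Your countable and regular-cardinal cases are essentially sound, and they follow the elementary route (maximal rainbow set, cover the complement by perpendicular-bisector hyperplanes and spheres, pigeonhole by regularity, descend in dimension) that the paper acknowledges as a direct alternative to its model-theoretic argument via Theorem~\ref{IndiscRegular2} and Lemma~\ref{RainbowLemma}; this is in effect the ``easier proof'' behind Theorem~\ref{ErdosTheorem2}. The one thing to tighten there is the descent step when the relevant centers or points are affinely dependent: pass to the affine hull first so that each iteration genuinely drops the dimension of the ambient sphere or flat.

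The singular case, however, has a genuine gap, and it is precisely the point the paper identifies as the main difficulty. Your construction controls only the distances realized \emph{within} each block: each $Y_i$ is rainbow and its internal distances avoid $F_i$. But $Y=\bigcup_i Y_i$ also contains cross-pairs $\{u,v\}$ with $u\in Y_i$, $v\in Y_j$, $i\neq j$, and nothing in the construction prevents two such cross-pairs, or a cross-pair and a within-pair, from realizing the same distance (two blocks sitting near two parallel lines already produce such collisions). This cannot be folded into the forbidden set $F_i$: the cross-distances created while building $Y_i$ number $\kappa_i$ and depend on the points being chosen, whereas your avoidance argument only tolerates a forbidden set of size $<\kappa_i$ fixed in advance. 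A greedy point-by-point repair, where each new point must realize fresh distances to \emph{all} previously chosen points across all blocks, then faces the possibility that the bad hypersurfaces cover all of $X_i$, forcing dimension drops that must be made coherent across the whole transfinite recursion. This is exactly why \Erdosns's original proof is ``complicated'' for singular $|X|$, and it is what the paper's machinery is for: Theorem~\ref{IndiscSingularCountCof} extracts from $X$ a subset of full size that is $(\Delta_d,E)$-indiscernible for an equivalence relation $E$ with infinite classes, so that all cross-class distance coincidences are governed by a single type, and Theorem~\ref{SingularRainbow} then rules those coincidences out with a strictly descending chain of prevarieties and Hilbert's basis theorem. Your proposal needs some substitute for this step before the union $\bigcup_i Y_i$ can be claimed to be $2$-rainbow.
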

The case when $|X|$ is countable can be dealt with quickly using the canonical Ramsey theorem of \Erdos and Rado \cite{CanRamsey}. Alternatively, it is equivalent to apply Ramsey's theorem to a coloring $g: \binom{X}{2a} \to c$, for $c < \omega$ large enough. Namely, given $s \in \binom{X}{2a}$, define $g(s)$ so as to encode the set of all pairs $(u, v)$ from $\binom{s}{a}$ having the same volume. For our purposes, we find this latter approach more natural, although some of what we do could be phrased in the language of the canonical Ramsey theorem. 

 \Erdosns's proof of Theorem~\ref{ErdosTheorem} is complicated by the possibility that $|X|$ is singular. He notes the following holds by an easier proof:

\begin{theorem}\label{ErdosTheorem2}
If $X \subseteq \mathbb{R}^d$ is infinite with $|X|$ regular, and $2 \leq a \leq d+1$, then there is an $a$-rainbow $Y \subseteq X$ with $|Y| = |X|$.
\end{theorem}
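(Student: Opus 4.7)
My plan is to prove Theorem~\ref{ErdosTheorem2} by transfinite recursion of length $\kappa = |X|$, building $Y = \{y_\alpha : \alpha < \kappa\}$ so that every initial segment $Y_\alpha = \{y_\beta : \beta < \alpha\}$ is already $a$-rainbow. At the successor step I must choose $y_\alpha \in X \setminus Y_\alpha$ lying outside the ``bad set''
\[
B_\alpha = \{y \in \R^d : Y_\alpha \cup \{y\} \text{ is not $a$-rainbow}\}.
\]
Since $|Y_\alpha| < \kappa$ and $\kappa$ is regular, it suffices to prove $|X \cap B_\alpha| < \kappa$, and then any point of $X \setminus (Y_\alpha \cup B_\alpha)$ works.

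The structure of $B_\alpha$ is the key observation. Because $Y_\alpha$ is already $a$-rainbow, any collision of nonzero $a$-volumes in $Y_\alpha \cup \{y\}$ must involve $y$, so it falls into one of two patterns: $\mathrm{vol}^2(u_0 \cup \{y\}) = \mathrm{vol}^2(v)$ for some $u_0 \in \binom{Y_\alpha}{a-1}$, $v \in \binom{Y_\alpha}{a}$, or $\mathrm{vol}^2(u_0 \cup \{y\}) = \mathrm{vol}^2(u_0' \cup \{y\})$ for distinct $u_0, u_0' \in \binom{Y_\alpha}{a-1}$. Using the Cayley--Menger determinant, each of these is a polynomial equation in $y$ of degree bounded by a function of $a$ only. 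When $u_0$ is ``non-degenerate'' (its $a-1$ points span an $(a-2)$-flat of full affine dimension), $\mathrm{vol}^2(u_0 \cup \{y\})$ is a non-constant polynomial in $y$ (it depends on the distance from $y$ to $\mathrm{aff}(u_0)$, which is well-defined because $a \leq d+1$), and the equation carves out a proper algebraic subvariety of $\R^d$. Degenerate $u_0$ contribute nothing because then $\mathrm{vol}(u_0 \cup \{y\}) \equiv 0$ and the collision has zero volume, which is allowed. Thus $B_\alpha$ is covered by at most $|Y_\alpha|^{O(1)} < \kappa$ proper algebraic subvarieties of $\R^d$, each of degree bounded by a constant $D = D(a)$.

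The remaining task is to prove $|X \cap V| < \kappa$ for every such $V$; by regularity this yields $|X \cap B_\alpha| < \kappa$. This is the step I expect to be the main obstacle, since an arbitrary $X$ can be concentrated on a low-dimensional variety. I would handle it by a preprocessing induction on $d$: if some proper algebraic subvariety $W \subsetneq \R^d$ of degree $\leq D$ satisfies $|X \cap W| = \kappa$, replace $X$ by $X \cap W$, stratify $W$ into smooth pieces, and pass to a piece containing $\kappa$ points of $X$, which we may identify with a subset of $\R^{d'}$ for some $d' < d$ (the volumes continue to be computed in the original ambient space, which is immaterial for the rainbow property). Iterating, either we descend into the degenerate case where $X$ lies in an $(a-2)$-dimensional affine flat, in which case every $a$-volume is zero and $Y := X$ trivially works, or we arrive at an $X$ of size $\kappa$ such that $|X \cap V| < \kappa$ for every proper subvariety $V$ of degree $\leq D$. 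In this ``generic'' case the transfinite recursion outlined above runs to completion and produces the desired $Y$.
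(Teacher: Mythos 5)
Your overall architecture (greedy transfinite recursion of length $\kappa$, plus a preprocessing descent onto a variety carrying $\kappa$ points of $X$) is the classical route to this theorem and is different in flavor from the paper, which instead extracts a $\Delta_d$-indiscernible subset via stability theory and then runs one geometric argument (Lemma~\ref{RainbowLemma}). But as written your proof has a genuine gap at the central claim that $B_\alpha$ is covered by fewer than $\kappa$ proper subvarieties. In the second collision pattern, $\mathrm{vol}^2(u_0 \cup \{y\}) = \mathrm{vol}^2(u_0' \cup \{y\})$ with $u_0 \neq u_0'$ \emph{both non-degenerate}, the difference polynomial can vanish identically: this happens exactly when $\mathrm{aff}(u_0) = \mathrm{aff}(u_0')$ and the two $(a-2)$-volumes are equal, since then for every $y$ both simplices have the same base volume and the same height. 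In that case the bad locus contributed by the pair $(u_0, u_0')$ is all of $\R^d$ minus a single $(a-2)$-flat, which is not contained in any proper subvariety, and after your preprocessing $|X \cap B_\alpha| = \kappa$, so the recursion dies. Your remark that ``degenerate $u_0$ contribute nothing'' only disposes of the case where $u_0$ itself spans too small a flat; it does not address this jointly degenerate pair.

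The invariant ``$Y_\alpha$ is $a$-rainbow'' is too weak to prevent such pairs from arising inside $Y_\alpha$, precisely because the rainbow condition ignores zero volumes. Concretely, take $d=2$, $a=3$: four collinear points $p_1, p_2, p_3, p_4$ with $|p_1 p_2| = |p_3 p_4|$ form a perfectly good $3$-rainbow set (all triangles degenerate), and nothing in your bad sets $B_2, B_3$ stops the recursion from selecting them when $X$ happens to contain them; but once $Y_4$ contains them, every $y$ off the common line yields two triangles of equal nonzero area. This is exactly the configuration driving the paper's Theorem~\ref{ErdosExample}, and your preprocessing cannot exclude it, since it controls how many points of $X$ lie on a bounded-degree variety, not the internal structure of the $({<}\kappa)$-sized set $Y_\alpha$ you have built. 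The repair is to strengthen the recursion invariant to what the paper calls \emph{strictly} $a'$-rainbow: after the descent determines the largest $a'$ for which $X$ is not concentrated in an $(a'-2)$-flat, maintain that all $a''$-subsets of $Y_\alpha$ for $2 \leq a'' \leq a'$ have pairwise distinct \emph{and nonzero} volumes. Distinctness at arity $a-1$ then rules out $\mathrm{vol}(u_0) = \mathrm{vol}(u_0')$, killing the identically-vanishing case, and each bad condition really does become a proper subvariety of the stratum (for your pattern one, note that any collision variety containing the whole stratum would contain the points of $u_0$ themselves and hence force $\mathrm{vol}(v) = 0$). With that strengthened invariant your argument goes through and recovers essentially the content of the paper's Theorem~\ref{RainbowRegular}.
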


\Erdos also gives the following example:

\begin{theorem}\label{ErdosExample}
If $\lambda \leq 2^{\aleph_0}$ is singular then there is $X \subseteq \mathbb{R}^d$ of size $\lambda$, such that there is no $3$-rainbow $Y \subseteq X$ with $|Y| = \lambda$.
\end{theorem}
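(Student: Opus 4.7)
The plan is to exploit singularity directly by placing $X$ on $\mathrm{cf}(\lambda)$ parallel lines, and then ruling out large rainbow subsets via a ``shared base, two apexes'' argument.

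Let $\kappa = \mathrm{cf}(\lambda)$ and pick cardinals $\lambda_i < \lambda$ $(i < \kappa)$ with $\sup_{i<\kappa} \lambda_i = \lambda$. Since $\kappa < \lambda \leq 2^{\aleph_0}$ and each $\lambda_i \leq 2^{\aleph_0}$, I can choose pairwise distinct parallel lines $L_i \subseteq \mathbb{R}^d$ (any $d \geq 2$ suffices) and sets $X_i \subseteq L_i$ with $|X_i| = \lambda_i$; let $X = \bigsqcup_{i<\kappa} X_i$, so $|X| = \lambda$.

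Now suppose, for contradiction, that $Y \subseteq X$ is 3-rainbow with $|Y| = \lambda$. Set $Y_i = Y \cap L_i$. The key observation is that at most one index $i$ can have $|Y_i| \geq 2$. Indeed, if $i \neq j$ with $|Y_i|, |Y_j| \geq 2$, choose distinct $p_1, p_2 \in Y_i$ and distinct $p_3, p_3' \in Y_j$. Then $\{p_1, p_2, p_3\}$ and $\{p_1, p_2, p_3'\}$ are distinct 3-subsets of $Y$ (they differ at the apex). Both share the base $\overline{p_1 p_2} \subseteq L_i$, and since $L_i \parallel L_j$, both apexes $p_3, p_3'$ lie at the common perpendicular distance $d(L_i, L_j)$ from $L_i$. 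Hence both triangles have area $\tfrac{1}{2}\,|p_1 - p_2|\cdot d(L_i, L_j) > 0$, contradicting rainbowness.

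So at most one index $i^*$ carries $\geq 2$ points of $Y$, whence
\[
|Y| \;\leq\; |Y_{i^*}| + \sum_{i \neq i^*} 1 \;\leq\; \lambda_{i^*} + \kappa.
\]
Both $\lambda_{i^*}$ and $\kappa$ are $< \lambda$, and $\lambda$ is a limit cardinal, so $\lambda_{i^*} + \kappa < \lambda$, contradicting $|Y| = \lambda$. The only step doing any work is the shared-base observation, which uses nothing beyond the fact that parallel lines in $\mathbb{R}^d$ are equidistant; the cardinal arithmetic step is where singularity of $\lambda$ is essential (for regular $\lambda$ one cannot force $\lambda_{i^*} + \kappa < \lambda$, in harmony with Theorem~\ref{ErdosTheorem2}).
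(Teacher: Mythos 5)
Your proposal is correct and is essentially the paper's own proof: the same construction (points distributed over $\mathrm{cf}(\lambda)$ parallel lines with cardinalities cofinal in $\lambda$) and the same key geometric fact (two triangles with a common base on one line and apexes on a parallel line have equal nonzero area). The only cosmetic difference is that you phrase the counting as ``at most one line carries two points of $Y$'' where the paper shows cofinally many $Y_\alpha$ are infinite; these are interchangeable.
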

\begin{proof}
Write $\mbox{cof}(\kappa)= \lambda$. Let $(\ell_\alpha: \alpha < \lambda)$ be $\lambda$-many parallel lines in $\mathbb{R}^2$. Let $(\kappa_\alpha: \alpha < \lambda)$ be a cofinal sequence of regular cardinals in $\kappa$. Choose $X_\alpha \subseteq \ell_\alpha$ of cardinality $\kappa_\alpha$ and let $X = \bigcup_{\alpha < \lambda} X_\alpha$. Let $Y \subseteq X$ have cardinality $\kappa$. We claim that $Y$ cannot be $3$-rainbow. Indeed, write $Y_\alpha = Y \cap X_\alpha = Y \cap \ell_\alpha$. Then there must be cofinally many $\alpha < \lambda$ with $Y_\alpha$ infinite, as otherwise $|Y| \leq \kappa_\alpha + \lambda < \lambda$ for some $\alpha < \lambda$. Thus we can find $\alpha < \beta < \lambda$ such that $Y_\alpha$ and $Y_\beta$ are both infinite. Let $v_0, v_1$ be two distinct points in $Y_\alpha$, and let $w_0, w_1$ be two distinct points in $Y_\beta$. Then the triangles $(v_0, v_1, w_0)$ and $(v_0, v_1, w_1)$ have the same nonzero area.
\end{proof}

We are interested in strengthenings and generalizations of Theorem~\ref{ErdosTheorem2} for uncountable sets. We will give stronger canonization results than just $a$-rainbow. 
Namely, say that 
$X$ is strongly $a$-rainbow if all $a$-subsets of $X$ yield distinct, 
nonzero volumes, and say that $X$ is strictly $a$-rainbow if $X$ 
is strongly $a'$-rainbow for all $a' \leq a$, and $X$ is a subset of 
an $a-1$-dimensional hyperplane. 
(In particular, all $a+1$-subsets of $X$ have volume $0$.) As an example, if $a_{n, i}: n < \omega, i < d$ are algebraically independent reals, and if we set $\overline{a}_n = (a_{n, 0}, \ldots, a_{n, d-1}) \in \mathbb{R}^d$, then $X := \{\overline{a}_n: n < \omega\}$ is strongly $d+1$-rainbow, and thus strictly $d+1$-rainbow. Moreover, if $\rho: \mathbb{R}^{d} \to \mathbb{R}^{d'}$ is any isometric embedding, then the image of $X$ under $\rho$ is also strictly $d+1$-rainbow.
%
%
%

In Section~\ref{ModelTheory}, we begin by reviewing some model-theoretic results of Shelah (Theorems~\ref{IndiscAleph0},~\ref{IndiscRegular},~\ref{IndiscRegular2}), dealing with the following situation: we are given $T$ stable, $M \models T$, and $X \subseteq M$ infinite, and we try to find $Y \subseteq X$ with $|Y| = |X|$ and $Y$ indiscernible. These theorems only deal with the case where $|X|$ is regular; Theorem~\ref{ErdosExample} above shows that obstacles exist for the singular case. The problem is the presence of an equivalence relation $E$ on $X$ that divides $X$ into fewer than $\kappa$-many classes, each of size less than $\kappa$. Theorems~\ref{IndiscSingularUncountCof} and ~\ref{IndiscSingularCountCof} together demonstrate that this is the only obstruction, using a weakened notion of indiscernibility with respect to an equivalence relation $E$. We remark that the combinatorial argument for Theorem~\ref{IndiscSingularCountCof} has other applications; we give a purely finitary analogue in Theorem~\ref{Finitary}.

In Section~\ref{kappaRainbow}, we consider $X \subseteq \mathbb{R}^d$ of size $\kappa$ for some uncountable cardinal $\kappa$, and we try to get $Y \subseteq X$ of size $\kappa$ which is as nice as possible with respect to $a$-ary volumes for $a \leq d+1$, using the results of Section~\ref{ModelTheory}. We prove in Theorem~\ref{RainbowRegular} that for every regular cardinal $\kappa \leq 2^{\aleph_0}$, and for every $X \subseteq \mathbb{R}^d$ of size $\kappa$, there is some $X' \subseteq X$ of size $\kappa$ and some $2 \leq a \leq d+1$ such that $X$ is strictly $a$-rainbow. We proceed as follows: given $X \subseteq \mathbb{R}^d$ of size $\kappa$, we obtain a sufficiently indiscernible $Y \subseteq X$ using Theorem~\ref{IndiscRegular2}, using the stability of $(\mathbb{C}, +, \cdot, 0, 1)$. Then we apply geometric arguments to argue that $Y$ is strictly $a$-rainbow for some $a$. We note that it is possible to prove Theorem~\ref{RainbowRegular} directly, similarly to Theorem~\ref{ErdosTheorem2}. 

For singular cardinals, we know from Theorem~\ref{IndiscSingularCountCof} that there is some finite list of possible configurations, although we cannot identify it explictly. We are at least able to give some information about what the configurations look like in Theorem~\ref{SingularRainbow}; in particular, they are all $2$-rainbow, and so we recover \Erdosns's Theorem~\ref{ErdosTheorem}.

In Section~\ref{PSPSection}, we consider what happens for $X \subseteq \mathbb{R}^d$ which is reasonably definable. Our main result is Theorem~\ref{th:perfect}: if $P \subseteq \mathbb{R}^d$ is perfect, then there is a perfect $Q \subseteq P$ and some $a \leq d+1$ such that $Q$ is strictly $a$-rainbow. Our main tool is a Ramsey-theoretic result of Blass \cite{Blass} concerning colorings of perfect trees.

In Section~\ref{independence}, we show it is independent of ZF whether or not every uncountable subset of $\mathbb{R}$ has an uncountable $2$-rainbow subset.

In this paper we work in ZFC, with the exception of Section~\ref{PSPSection}, which is in ZF + DC.

\section{Some remarks on indiscernibles}\label{ModelTheory}
We first review the notion of local indiscernibility, following Shelah \cite{ShelahIso}. $T$ will always be a complete first order theory in a countable language.

Suppose $\Delta$ is a collection of formulas of $T$, $M \models T$ and $A \subseteq M$. Given a finite tuple $\overline{b}$ from $M$, define $tp_{\Delta}(\overline{b}/A)$ to be the set of all formulas $\phi(\overline{x}, \overline{a})$ such that $\overline{a} \in A^{<\omega}$ and $\phi(\overline{x}, \overline{y}) \in \Delta$ and $M \models \phi(\overline{b}, \overline{a})$. 

Suppose also that $I$ is an index set, and $(\overline{a}_i: i \in I)$ is a sequence from $M^d$ for some $d < \omega$. Then:

\begin{itemize}
\item  We say that $(\overline{a}_i: i \in I)$ is $\Delta$-\emph{indiscernible} over $A$ if: given $i_0, \ldots, i_{n-1}$ all distinct elements of $I$, and given $j_0, \ldots, j_{n-1}$ also distinct elements from $I$, then $tp_\Delta(\overline{a}_{i_0}, \ldots, \overline{a}_{i_{n-1}}/A) = tp_{\Delta}(\overline{b}_{i_0}, \ldots, \overline{b}_{i_{n-1}})$. In this case the indexing doesn't matter and so we also say that $\{\overline{a}_i: i \in I\}$ is \emph{indiscernible} over $A$.
\item If $I$ is linearly ordered, then we say that $(\overline{a}_i: i \in I)$ is $\Delta$-\emph{order-indiscernible} over $A$ if: for every $i_0 < \ldots < i_{n-1}$, $j_0 < \ldots < j_{n-1}$ from $X$, $tp_\Delta(\overline{a}_{i_0}, \ldots, \overline{a}_{i_{n-1}}/A) = tp_{\Delta}(\overline{b}_{i_0}, \ldots, \overline{b}_{i_{n-1}})$.
\end{itemize}
When we do not mention $A$, we mean $A = \emptyset$.

The following is an easy application of Ramsey's theorem  (as recorded for instance in Lemma 2.3 of Chapter I of \cite{ShelahIso}:

\begin{theorem}\label{IndiscAleph0Order}
Suppose $T$ is a complete first order theory in a countable language, and $\Delta$ is a finite collection of formulas of $T$. Suppose $M \models T$, and $(\overline{a}_n: n < \omega)$ is an infinite sequence from $M^d$. Then there is some infinite subsequence $(\overline{a}_n: n \in I)$ which is $\Delta$-order indiscernible.
\end{theorem}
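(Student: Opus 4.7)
The plan is a standard iterated Ramsey plus diagonalization argument. The key preliminary observation is that since $\Delta$ is finite, the set of possible $\Delta$-types of $n$-tuples from $M^d$ (without parameters) is itself finite for each $n$: a $\Delta$-type of an $n$-tuple is determined by recording, for each formula $\phi(\overline{x}_0, \ldots, \overline{x}_{k-1}) \in \Delta$ of arity $k \leq n$ and each choice of $k$-tuple of coordinates, whether $\phi$ holds. So for each $n < \omega$ I obtain a finite coloring
\[
c_n \st [\omega]^n \to C_n, \qquad c_n(\{i_0 < \cdots < i_{n-1}\}) = tp_\Delta(\overline{a}_{i_0}, \ldots, \overline{a}_{i_{n-1}}).
\]

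Next I would iteratively apply Ramsey's theorem to build a nested chain of infinite subsets of $\omega$: set $I_0 = \omega$, and given an infinite $I_n \subseteq \omega$, apply Ramsey's theorem to the restriction of $c_{n+1}$ to $[I_n]^{n+1}$ to obtain an infinite $I_{n+1} \subseteq I_n$ on which $c_{n+1}$ is constant. Then every strictly increasing $(n+1)$-tuple drawn from $I_{n+1}$ has a single common $\Delta$-type.

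Finally I diagonalize: choose $k_0 < k_1 < \cdots$ with $k_n \in I_n$, and set $I = \{k_n \st n < \omega\}$. If $n_0 < \cdots < n_{m-1}$ and $n_0' < \cdots < n_{m-1}'$ are two strictly increasing $m$-tuples of indices, then $k_{n_0}, \ldots, k_{n_{m-1}}$ and $k_{n_0'}, \ldots, k_{n_{m-1}'}$ both lie in $I_m$ (since $I_m \supseteq I_{m+1} \supseteq \cdots$ and every $k_n$ for $n \geq m$ belongs to $I_m$), so by the homogeneity of $c_m$ on $[I_m]^m$ they share the same $\Delta$-type. Hence $(\overline{a}_n \st n \in I)$ is $\Delta$-order indiscernible.

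There is no substantive obstacle here; the only thing to be a bit careful about is verifying the finiteness of the color set $C_n$, which rests crucially on $\Delta$ being finite (otherwise infinitely many formulas could produce infinitely many distinct $\Delta$-types even on a fixed finite number of variables). Everything else is the standard Ramsey-plus-diagonalization template.
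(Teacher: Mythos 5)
The paper does not actually prove this theorem; it cites Lemma 2.3 of Chapter I of \cite{ShelahIso}, so I can only compare your argument against the standard one. Your setup and the nested Ramsey applications are fine, but the diagonalization step has a genuine gap. You choose $k_n \in I_n$ and claim that any increasing $m$-tuple $k_{n_0} < \cdots < k_{n_{m-1}}$ from $I$ lies inside $I_m$; your parenthetical justification covers only the indices $n_j \geq m$, and for the others the claim simply fails: when $n_0 < m$, the element $k_{n_0}$ is only guaranteed to lie in $I_{n_0}$, which \emph{contains} $I_m$ rather than being contained in it. This is not a cosmetic slip that re-indexing repairs. In general no infinite set is simultaneously homogeneous for a family of colorings $c_m : \binom{\omega}{m} \to 2$ of unbounded arity (take $c_m(s) = 1$ iff $\min(s) \geq m$: any infinite $H$ with least element $h_0$ fails homogeneity for $c_{h_0+1}$), so no diagonalization of this shape can work for arbitrary $c_m$. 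Even for your specific type-colorings the construction as written can output a bad set: if $\Delta = \{P(x)\}$ for a unary predicate $P$ holding exactly of $\{\overline{a}_n : n \text{ even}\}$, then $I_1$ may consist of the even indices while nothing prevents you from choosing $k_0$ odd, and the resulting $I$ is not even $\Delta$-order-indiscernible at the level of $1$-tuples.

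The missing idea is the one place where finiteness of $\Delta$ does real work beyond keeping each color set $C_n$ finite: the formulas of $\Delta$ have \emph{bounded arity}. Let $r$ be the largest number of $d$-blocks of variables occurring in a formula of $\Delta$. Then whether a given instance of a formula of $\Delta$ holds of an $m$-tuple depends on at most $r$ of its positions, so the $\Delta$-type of an $m$-tuple is determined by the $\Delta$-types of its increasing subtuples of length at most $r$. Consequently $\Delta$-order-indiscernibility follows from homogeneity for $c_1, \ldots, c_r$ alone, and your nested sets already deliver this: $I_r \subseteq I_m$ for every $m \leq r$, so $(\overline{a}_n : n \in I_r)$ is $\Delta$-order-indiscernible and no diagonalization is needed at all. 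This is also exactly where the hypothesis that $\Delta$ is finite is essential; with unbounded arities the simultaneous-homogeneity statement you are implicitly invoking is false.
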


We will be mainly interested in the case where $T$ is stable. In this case, the following is part of Theorem 2.13 of Chapter II of \cite{ShelahIso}:

\begin{theorem}\label{IndiscAleph0}
Suppose $T$ is a stable complete first order theory in a countable language, and $\Delta$ is a finite collection of formulas of $T$. Suppose $M \models T$, and $(\overline{a}_n: n < \omega)$ is an infinite sequence from $M^d$. Then $(\overline{a}_n: n < \omega)$ is order-indiscernible if and only if it is indiscernible (in fact this characterizes stability). Hence, if $X \subseteq M^d$ is infinite, then there is an infinite, $\Delta$-indiscernible $Y \subseteq X$.  
\end{theorem}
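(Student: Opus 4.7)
The plan is to prove the nontrivial direction of the equivalence (order-indiscernible implies indiscernible, under stability), establish the converse characterization (if $T$ is unstable, the equivalence fails), and deduce the ``hence'' clause from Theorem~\ref{IndiscAleph0Order}. The easy direction (indiscernibility implies order-indiscernibility) is immediate from the definitions; the real work uses the characterization of stability as the absence of the order property.

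Suppose $(\overline{a}_n : n < \omega)$ is $\Delta$-order-indiscernible but not $\Delta$-indiscernible. Unwinding the definitions, there is some $\phi(\overline{x}_0, \ldots, \overline{x}_{m-1}) \in \Delta$, indices $i_0 < \cdots < i_{m-1}$, and a nontrivial permutation $\sigma$ of $\{0,\ldots,m-1\}$ such that $M \models \phi(\overline{a}_{i_0},\ldots,\overline{a}_{i_{m-1}}) \not\leftrightarrow \phi(\overline{a}_{i_{\sigma(0)}},\ldots,\overline{a}_{i_{\sigma(m-1)}})$. Decomposing $\sigma$ into adjacent positional transpositions $\tau_1,\ldots,\tau_r$ and considering the chain $\sigma_j = \tau_1 \cdots \tau_j$, some consecutive pair $\sigma_j,\sigma_{j+1}$ must produce different truth values for $\phi$, so we may assume $\sigma$ is a single adjacent transposition $(k, k+1)$. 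Set $\overline{c} := (\overline{a}_{i_0},\ldots,\overline{a}_{i_{k-1}},\overline{a}_{i_{k+2}},\ldots,\overline{a}_{i_{m-1}})$ and define $\psi(\overline{u},\overline{v};\overline{c}) := \phi(\overline{a}_{i_0},\ldots,\overline{a}_{i_{k-1}},\overline{u},\overline{v},\overline{a}_{i_{k+2}},\ldots,\overline{a}_{i_{m-1}})$. Then $\psi(\overline{a}_{i_k},\overline{a}_{i_{k+1}};\overline{c})$ holds while $\psi(\overline{a}_{i_{k+1}},\overline{a}_{i_k};\overline{c})$ fails. By order-indiscernibility of $(\overline{a}_n)$, for any pair $p < q$ interleaved appropriately with the background indices (passing to a densely indexed order-indiscernible stretch by compactness if necessary), $\psi(\overline{a}_p,\overline{a}_q;\overline{c})$ holds and $\psi(\overline{a}_q,\overline{a}_p;\overline{c})$ fails. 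This exhibits the order property for $\psi$ with parameters $\overline{c}$ on an infinite sequence, contradicting stability of $T$.

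For the converse characterization, if $T$ is unstable then some formula $\phi(\overline{x},\overline{y})$ has the order property, witnessed by $(\overline{a}_n,\overline{b}_n : n < \omega)$ satisfying $M \models \phi(\overline{a}_i,\overline{b}_j)$ iff $i < j$. Applying Theorem~\ref{IndiscAleph0Order} to the concatenated sequence yields an order-indiscernible subsequence that remains a witness to the order property; this subsequence is $\{\phi\}$-order-indiscernible but not $\{\phi\}$-indiscernible. For the ``hence'' clause, given infinite $X \subseteq M^d$, enumerate a countable infinite sequence from $X$, apply Theorem~\ref{IndiscAleph0Order} to extract an infinite $\Delta$-order-indiscernible subsequence, and invoke the equivalence just proved.

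The main obstacle is the reduction in the second paragraph to an infinite order-property instance: the indices $p, q$ used to vary $\psi$ must fit into the original position of $\overline{a}_{i_k}, \overline{a}_{i_{k+1}}$ relative to the frozen background $\overline{c}$. The standard workaround is to pass by compactness to an elementary extension containing an order-indiscernible sequence indexed by $\mathbb{Q}$ (or any dense linear order), where arbitrarily many indices can be inserted into any prescribed order-interval.
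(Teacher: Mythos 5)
The paper does not actually prove Theorem~\ref{IndiscAleph0}; it cites it as part of Theorem 2.13 of Chapter II of Shelah's \emph{Classification Theory}. So there is no in-paper argument to compare against, and your proposal must stand on its own. Its overall architecture is the standard one (reduce to a single transposition, extract the order property, contradict stability; handle the converse via an order-property witness; deduce the ``hence'' clause from Theorem~\ref{IndiscAleph0Order}), and the converse direction and the ``hence'' clause are fine. But there is a genuine gap in the reduction step of the main direction.

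Decomposing $\sigma$ into adjacent \emph{positional} transpositions and locating a consecutive pair $\sigma_j, \sigma_{j+1}$ in the chain where the truth value flips does \emph{not} let you ``assume $\sigma$ is a single adjacent transposition'' acting on the increasing tuple. What it gives you is some intermediate, generally non-monotone arrangement $(\overline{a}_{j_0}, \ldots, \overline{a}_{j_{m-1}})$ in which swapping the entries at positions $k, k+1$ changes the truth value of $\phi$; the two swapped elements $\overline{a}_{j_k}, \overline{a}_{j_{k+1}}$ are adjacent in \emph{position} but need not be consecutive in the \emph{index order} among $\{j_0,\ldots,j_{m-1}\}$. If some background index $j_l$ lies strictly between $j_k$ and $j_{k+1}$, then any pair $p, q$ ``interleaved appropriately'' must straddle $j_l$, so every admissible $p$ is below every admissible $q$; the truth value of $\psi(\overline{a}_p, \overline{a}_q; \overline{c})$ is then constant on all admissible pairs and no order property can be extracted, even after passing to a $\mathbb{Q}$-indexed sequence. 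Your subsequent use of the indices $i_k, i_{k+1}$ (consecutive in the increasing enumeration) silently assumes the swapped pair has no background element between it, which the chain argument does not deliver. The standard fix runs the implication in the other direction: first show, exactly by your order-property argument, that for every increasing $i_0 < \cdots < i_{m-1}$ and every $k$, transposing the index-consecutive entries $\overline{a}_{i_k}, \overline{a}_{i_{k+1}}$ in place preserves the type of the tuple (here the background indices leave a free interval, and your compactness/density device, or simply re-choosing the indices with arbitrarily large gaps, applies). Then conclude full indiscernibility from the facts that these transpositions generate the symmetric group and that type-equality of two arrangements of the same elements is preserved when both are permuted by a common coordinate permutation (so the relation ``same type'' on arrangements is a right congruence). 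One should also close $\Delta$ under the finitely many variable permutations for the local statement, but that is cosmetic.
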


We are interested in generalizations of Theorem~\ref{IndiscAleph0} to the case where $X$ has uncountable cardinality $\kappa$.  Shelah has proved several results along these lines for regular cardinals; we give two versions. The first requires $T$ to be $\omega$-stable, and gets full indiscernibility. See Remark 2 after 
Theorem 2.8 from Chapter 1 of \cite{ShelahIso}.

\begin{theorem}\label{IndiscRegular}
Let $T$ be an $\omega$-stable theory (we can suppose in a countable language). Let $\kappa$ be a regular uncountable cardinal and let $d < \omega$. Then whenever $M \models T$, $A \subseteq M$ has size less than $\kappa$, and $X \subseteq M^d$ has size $\geq \kappa$, we can find some finite sequence $\overline{a} \in M$, and we can find some stationary type $p(\overline{x}) \in S^d(\overline{a})$, such that there is some $Y \subseteq X$ of size $\kappa$ which is a set of independent realizations of $p(\overline{x}) | A \overline{a}$. In particular, $Y$ is indiscernible over $A$.

\end{theorem}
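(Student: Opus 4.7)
The plan is to extract a large indiscernible subset of $X$ in two phases: first find a common stationary type over a finite parameter tuple $\overline{a} \in M$, then extract a Morley sequence inside the set of its realizations. I work inside a monster model $\mathfrak{C} \succeq M$ and use freely that an $\omega$-stable theory is stable in every infinite cardinality and assigns to every complete type an ordinal Morley rank $MR$ and a positive integer Morley degree $MD$.

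In the first phase I would use pigeonhole over $A$. Since $T$ is $\omega$-stable and the language is countable, $|S^d(A)| \leq |A| + \aleph_0 < \kappa$, so some $q \in S^d(A)$ is realized by a subset $X_0 \subseteq X$ of size $\kappa$. In an $\omega$-stable theory $q$ has only finitely many strong types extending it (exactly $MD(q)$ of them), each stationary, so finite pigeonhole yields $X_1 \subseteq X_0$ of size $\kappa$ whose elements all realize a common strong type $p^*$. Let $\overline{a} \in M$ be a finite tuple whose imaginary closure contains the canonical base of $p^*$, and set $p := p^* | \overline{a} \in S^d(\overline{a})$. Then $p$ is stationary, the non-forking extension $p | A\overline{a}$ equals $p^*$, and every element of $X_1$ realizes $p | A\overline{a}$.

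Among all triples $(\overline{a}, p, X_1)$ arising in this way I then choose one with $MR(p)$ minimal, which is possible since Morley rank is an ordinal. By transfinite recursion of length $\kappa$ I construct $Y = \{\overline{b}_\xi : \xi < \kappa\} \subseteq X_1$: at stage $\xi$, setting $B_\xi = A\overline{a} \cup \{\overline{b}_\eta : \eta < \xi\}$ so that $|B_\xi| < \kappa$, I pick $\overline{b}_\xi \in X_1 \setminus \{\overline{b}_\eta : \eta < \xi\}$ realizing the unique non-forking extension $p | B_\xi$.

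The main obstacle is showing that at each stage $\xi$ the ``bad'' set of $\overline{b} \in X_1$ whose type over $B_\xi$ forks over $A\overline{a}$ has size less than $\kappa$, so that a good $\overline{b}_\xi$ is available. Suppose otherwise. By $\omega$-stability $|S^d(B_\xi)| < \kappa$, so pigeonhole yields a single forking extension $r \in S^d(B_\xi)$ of $p^*$ realized by a set $X_1' \subseteq X_1$ of size $\kappa$. Choosing $\overline{a}' \in M$ whose imaginary closure contains the canonical base of $r$ (a finite tuple of $M^{eq}$, since $B_\xi \subseteq M$) and setting $p' := r | \overline{a}' \in S^d(\overline{a}')$, the triple $(\overline{a}', p', X_1')$ again satisfies the phase-one conditions but has $MR(p') = MR(r) < MR(p^*) = MR(p)$, contradicting the minimal choice. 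Hence the bad set is small, the recursion succeeds, and the resulting $Y$ is a Morley sequence in $p | A\overline{a}$; in a stable theory such a sequence is automatically a set of independent realizations of $p | A\overline{a}$ and is indiscernible over $A\overline{a}$, hence also over $A$, as required.
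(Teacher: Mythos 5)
Your proof is correct, but it takes a genuinely different route from the paper's. The paper enumerates $X = (a_\alpha : \alpha < \kappa)$, sets $X_\alpha = \mbox{acl}(A \cup \{a_\beta : \beta < \alpha\})$, picks for each $\alpha$ a formula of the same Morley rank as $tp(a_\alpha/X_\alpha)$ and of degree $1$, and applies Fodor's lemma to stabilize that formula on a stationary set; the resulting tail is automatically a Morley sequence in the associated stationary type. You instead pigeonhole twice (over $A$, then over the finitely many strong types) to fix a stationary type, minimize Morley rank over all candidate triples $(\overline{a}, p, X_1)$, and then run a transfinite recursion, using the minimality to show the recursion never gets stuck --- your key claim that the ``bad'' set at each stage is small is exactly where the rank drop of a forking extension pays off, and your equivalence-relation-style scenario is handled correctly because the minimization would already have selected the lower-rank type. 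Each approach has something to recommend it: the paper's is a one-pass argument with no global minimization, but Fodor's lemma genuinely requires $\kappa$ regular and uncountable, whereas your argument only needs regularity (so it also covers $\kappa = \aleph_0$, recovering the $\omega$-stable case of Theorem~\ref{IndiscAleph0}). One wrinkle to tidy up: you insist that $\overline{a}$ be a real tuple of $M$ whose imaginary closure contains $\mbox{Cb}(p^*)$, but such an $\overline{a}$ need not lie in $\mbox{acl}(A)$, and then ``every element of $X_1$ realizes $p \mid A\overline{a}$'' is not justified (a realization of the strong type $p^*$ over $\mbox{acl}^{eq}(A)$ may fork with the chosen representative $\overline{a}$ over $A$). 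The clean fix is the one the paper itself uses --- pass to $T^{eq}$ and let $\overline{a}$ be the finite tuple of imaginaries generating the canonical base, which lies in $\mbox{acl}^{eq}(A)$ --- at the cost of reading ``$\overline{a} \in M$'' in the statement as ``$\overline{a} \in M^{eq}$,'' exactly as the paper's own proof does.
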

\begin{proof}
For the reader's convenience we provide a proof.

We can suppose $T = T^{eq}$, and thus that we can code finite tuples as single elements. Also we can suppose that $|X| = \kappa$. Enumerate $X = (a_\alpha: \alpha < \kappa)$. For each $\alpha < \kappa$, write $X_\alpha = \mbox{acl}\left(A \cup \{a_\beta: \beta < \alpha\}\right)$, and choose a formula $\phi_\alpha(x)$ over $X_\alpha$ of the same Morley rank as $tp(a_\alpha/X_\alpha)$, and of Morley degree $1$. By Fodor's lemma, we can find $S \subseteq \kappa$ stationary such that $\phi_\alpha(x) = \phi_\beta(x) = \phi(x)$ for all $\alpha, \beta \in S$. Choose $\alpha_*$ large enough that $\phi(x)$ is over $X_{\alpha_*}$. Write $\phi(x) = \phi(x, a)$ for some $a \in X_\alpha$. Let $p(x)$ be the unique type over $a$ containing $\phi(x, a)$ and of the same Morley rank; then for all $\alpha_* \leq \alpha \in S$, $tp(a_\alpha/ X_\alpha)$ is the unique non-forking extension of $p(x)$ to $X_\alpha$. From this it follows easily that $Y := \{a_\alpha: \alpha \in S \backslash \alpha_*\}$ is as desired.
\end{proof}

The second version applies to any stable theory, but only gives local indiscernibility. It is Theorem 2.19 of Chapter II of \cite{ShelahIso}, and it strictly generalizes the final claim of Theorem~\ref{IndiscAleph0}.

\begin{theorem}\label{IndiscRegular2}
Let $T$ be a stable theory and let $\Delta$ be a finite set of formulas. Let $\kappa$ be a regular  cardinal and let $d < \omega$. Then whenever $M \models T$, $A \subseteq M$ has size less than $\kappa$, and $X \subseteq M^d$  has size $\geq \kappa$, then there is some $Y \subseteq X$ of size $\kappa$ such that $Y$ is $\Delta$-indiscernible over $A$.
\end{theorem}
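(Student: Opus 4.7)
The plan is to enumerate $X = (a_\alpha : \alpha < \kappa)$ and then extract a $\Delta$-indiscernible subsequence indexed by a stationary subset of $\kappa$ via a Fodor-type argument driven by the stability of $T$. The essential stability input is twofold: (a) the local counting bound $|S_\Delta(B)| \leq |B| + \aleph_0$ for any parameter set $B$, coming from the fact that each $\phi \in \Delta$ has the non-order property; and (b) the local finite-character of non-forking, which says each complete $\Delta$-type $p$ over $B$ is the unique $\Delta$-non-forking extension of its restriction to some \emph{finite} subset $C \subseteq B$.

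For each $\alpha < \kappa$ set $B_\alpha = A \cup \{a_\beta : \beta < \alpha\}$ and $p_\alpha = tp_\Delta(a_\alpha / B_\alpha)$, and choose a finite $C_\alpha \subseteq B_\alpha$ over which $p_\alpha$ does not fork. Writing $C_\alpha = C_\alpha^A \cup C_\alpha^X$ with $C_\alpha^A \subseteq A$ and $C_\alpha^X \subseteq \{a_\beta : \beta < \alpha\}$, for each limit $\alpha < \kappa$ the ordinal $f(\alpha) := \sup\{\beta + 1 : a_\beta \in C_\alpha^X\}$ is strictly below $\alpha$ (since $C_\alpha^X$ is finite). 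Applying Fodor's lemma to the regressive function $f$ on the club of limit ordinals, $f$ is constant with some value $\beta_* < \kappa$ on a stationary set. Since $|B_{\beta_*}| < \kappa$ and hence $|S_\Delta(B_{\beta_*})| < \kappa$, further pigeonhole on that stationary set yields $S \subseteq \kappa$ of cardinality $\kappa$ on which $C_\alpha$ is a fixed finite set $C_* \subseteq B_{\beta_*}$ and the restriction of $p_\alpha$ to $C_*$ is a fixed type $q \in S_\Delta(C_*)$.

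It then remains to verify that $Y := \{a_\alpha : \alpha \in S\}$ is $\Delta$-indiscernible over $A$. By construction each $a_\alpha$ (for $\alpha \in S$) realizes the unique $\Delta$-non-forking extension of $q$ over $B_\alpha$, so $(a_\alpha : \alpha \in S)$ is a $\Delta$-Morley sequence in $q$ over $A \cup C_*$; such sequences are $\Delta$-order-indiscernible over $A \cup C_*$, and hence $\Delta$-indiscernible over $A$ by the stability half of Theorem~\ref{IndiscAleph0}. The main obstacle I anticipate is setting up the first paragraph's machinery cleanly: one must develop the finite-canonical-base property entirely inside the category of $\Delta$-types (closing $\Delta$ under finitely many Boolean operations if necessary) and carefully verify that a $\Delta$-Morley sequence really is $\Delta$-indiscernible, rather than only indiscernible in the full language. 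Once these local analogues are in place, the Fodor and pigeonhole bookkeeping are routine, and the argument proceeds as in Shelah's proof of Theorem~2.19 of Chapter~II of \cite{ShelahIso}.
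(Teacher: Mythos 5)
The paper does not actually prove Theorem~\ref{IndiscRegular2}: it is quoted directly from Shelah (Theorem 2.19 of Chapter II of \cite{ShelahIso}). Your outline is the standard argument for it, and it runs parallel to the proof the paper \emph{does} supply for the $\omega$-stable analogue, Theorem~\ref{IndiscRegular}: enumerate $X$, attach to each $tp_\Delta(a_\alpha/B_\alpha)$ a small base (there via a Morley-rank-and-degree formula, here via local non-forking), push the bases below a fixed $\beta_*$ by Fodor, pigeonhole on the restricted type using the local counting bound, and finish by observing that the surviving sequence is a Morley sequence, hence order-indiscernible, hence indiscernible by stability (Theorem~\ref{IndiscAleph0}). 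So the strategy is right and is essentially the one the paper relies on.

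One assertion in your setup is false as stated, and it sits exactly where the real work lies: a complete $\Delta$-type over $B$ is in general \emph{not} the unique $\Delta$-non-forking extension of its restriction to a finite $C \subseteq B$. (Take $T$ the theory of an equivalence relation with two infinite classes and $\Delta = \{E(x,y)\}$: the $\Delta$-type of any element over $\emptyset$ has two non-forking extensions to any model.) Closing $\Delta$ under Boolean combinations does not repair this; what is missing is stationarity. The standard fixes are either to replace $C$ by $\mathrm{acl}^{eq}(C)$ --- countable for finite $C$ and countable $T$, hence still of size $< \kappa$ when $\kappa$ is uncountable, the case $\kappa = \aleph_0$ being Theorem~\ref{IndiscAleph0} anyway (note also that your Fodor step already silently assumes $\kappa > \aleph_0$) --- or to run one more pigeonhole over the finitely many non-forking extensions of $q$ (its $\Delta$-multiplicity), so that you fix the germ of the extension and not merely $q$ itself. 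With that repair, and with the usual care that $\Delta$ be closed under permuting and re-partitioning variables so that a $\Delta$-Morley sequence is genuinely $\Delta$-order-indiscernible rather than only order-indiscernible in the full language, your argument goes through and reproduces Shelah's proof.
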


When $\kappa$ is singular, the na\"{i}ve generalization of Theorem~\ref{IndiscRegular} fails, by Theorem~\ref{ErdosExample}. (In fact, one can easily modify this example to work in the theory of equality.) The problem here is the presence of an equivalence relation on $X$ that has fewer than $\kappa$ classes, with each class of size less than $\kappa$, and the behavior of elements in distinct classes differs from the behavior of elements in the same class. In fact, we show this is the only obstruction.

We wish to formalize the notion of ``indiscernible up to an equivalence relation." This a special case of generalized indiscernibles, introduced by Shelah in \cite{ShelahIso} Section VII.2, and further analyzed (with slightly varying definitions) in several subsequent papers, e.g. \cite{genIndiscEx}. (In these works, the focus is on using these generalized indiscernibles to build Ehrenfeucht-Mostowski models; our interest is different, in that we want to extract generalized indiscernibles from a given $X$.)

Suppose $T$ is a complete first order theory, and $\Delta$ is a collection of formulas. Suppose $M \models T$, and $A \subseteq M$, and $X \subseteq M^d$ for some $d$. Finally suppose $E$ is an equivalence relation on $X$. Then $X$ is $(\Delta, E)$-\emph{indiscernible} over $A$ if for every $\overline{a}_0, \ldots, \overline{a}_{n-1}$, $\overline{b}_0, \ldots, \overline{b}_{n-1}$ sequences from $X$ with each $\overline{a}_i \not= \overline{a}_j$ and each $\overline{b}_i \not= \overline{b}_j$, \emph{if}  for every $i < j < n$, $\overline{a}_i E \overline{a}_j$ if and only if $\overline{b}_i E \overline{b}_j$, \emph{then} $tp_\Delta(\overline{a}_i: i < n/A) = tp_\Delta(\overline{b}_i: i < n/A)$.  $X$ is $E$-indiscernible over $A$ if $X$ is $(\Delta, E)$-indiscernible over $A$, where $\Delta$ is the collection of all formulas of $T$.

So for instance, if $X$ is $\Delta$-indiscernible, then letting $E_=$ be the equivalence relation of identity on $X$, we have that $X$ is $(\Delta, E_=)$-indiscernible; and also $X$ is $(\Delta, X \times X)$-indiscernible.

We have the following adaptations of Theorem~\ref{IndiscRegular} for singular cardinals $\kappa$; again, we have two versions. In the first version, we need $T$ to be $\omega$-stable and we need $\kappa$ to have uncountable cofinality, and for this we get full $E$-indiscernibility. In the second version, we just need $T$ to be stable and $\kappa$ can be arbitrary, but for this we just get local $E$-indiscernibility.
\begin{theorem}\label{IndiscSingularUncountCof}
Let $T$ be an $\omega$-stable theory, and let $\kappa$ be a singular cardinal of cofinality $\lambda > \aleph_0$, and let $d < \omega$. Then whenever $M \models T$, $A \subseteq M$ has size less than $\mbox{cof}(\kappa)$, and $X \subseteq M^d$ has size $\geq \kappa$, there is some $Y \subseteq X$ of size $\kappa$ and some equivalence relation $E$ on $Y$, such that $E$ has at most $\lambda$-many equivalence classes, with each equivalence class infinite, and such that $Y$ is $E $-indiscernible over $A$.
\end{theorem}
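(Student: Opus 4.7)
The plan is to construct $Y = \bigsqcup_{\alpha < \lambda} Y_\alpha$, where each $Y_\alpha$ is a local Morley sequence of length $\kappa_\alpha$ (from a chosen cofinal sequence of regular cardinals in $\kappa$), arranged into a ``Morley-over-Morley'' structure; then I take $E$ to be the relation of belonging to a common $Y_\alpha$.

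First I would carry out the standard reductions. Working in $T^{\mathrm{eq}}$ I may assume $d = 1$, so $X \subseteq M$. Replacing $A$ by $\mbox{acl}(A)$, still of cardinality less than $\lambda$, I may assume $A$ is algebraically closed, so every type over $A$ is stationary. By $\omega$-stability, $|S_1(A)| \leq |A| + \aleph_0 < \lambda \leq |X|$, so by pigeonhole some $p \in S_1(A)$ is realized by $\kappa$-many elements of $X$; I restrict $X$ to these. I then fix a strictly increasing cofinal sequence of regular cardinals $(\kappa_\alpha : \alpha < \lambda)$ in $\kappa$ with $\kappa_\alpha > |A| + \lambda + |\alpha|$.

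Next I build $(\overline{c}_\alpha, Y_\alpha : \alpha < \lambda)$ recursively with $Y_\alpha \subseteq X$ pairwise disjoint, $|Y_\alpha| = \kappa_\alpha$, and $Y_\alpha$ a set of independent realizations of a stationary type over $B_\alpha \overline{c}_\alpha$ extending $p$, where $B_\alpha := A \cup \bigcup_{\beta<\alpha}(\overline{c}_\beta \cup Y_\beta)$. At stage $\alpha$, $|B_\alpha| < \kappa_\alpha$ (since $\alpha < \lambda = \mbox{cof}(\kappa)$), so Theorem~\ref{IndiscRegular} applied with base $B_\alpha$ and regular cardinal $\kappa_\alpha$ to $X \setminus B_\alpha$ produces the required $\overline{c}_\alpha$ and $Y_\alpha$. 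By pigeonhole on $\lambda > \aleph_0$ regular, I may arrange that each $\overline{c}_\alpha$ has a fixed finite length. I then apply Theorem~\ref{IndiscRegular} one more time, to the $\lambda$-sequence $(\overline{c}_\alpha : \alpha < \lambda)$, obtaining $S \subseteq \lambda$ of size $\lambda$ on which $(\overline{c}_\alpha : \alpha \in S)$ is a Morley sequence over $A$. After relabeling $S$ as $\lambda$, I set $Y = \bigcup_{\alpha<\lambda} Y_\alpha$ and let $E$ be the equivalence relation ``same $Y_\alpha$''; then $|Y| = \kappa$ and $E$ has $\lambda$ infinite classes of sizes $\kappa_\alpha$.

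The verification of $E$-indiscernibility rests on three ingredients: (i) each $Y_\alpha$ being Morley over $A\overline{c}_\alpha$, so that any $m$-subset has a type over $A\overline{c}_\alpha$ depending only on $m$; (ii) the inductive Morley-ness and symmetry of non-forking in stable theories giving mutual independence among distinct $Y_\alpha$'s over $A$ together with their respective $\overline{c}_\alpha$'s; (iii) the Morley-ness of $(\overline{c}_\alpha)$ over $A$, so that any finite tuple of $\overline{c}_\alpha$'s has a joint type over $A$ depending only on length. The main obstacle is to preserve (ii) after restricting from $\lambda$ to the Morley-friendly subset $S$: the independence of $Y_\alpha$'s, originally set up with respect to all earlier stages, must survive the forgetting of skipped stages $\beta \not\in S$ --- this requires transitivity and monotonicity of non-forking and is precisely where the uncountability of $\lambda = \mbox{cof}(\kappa)$ becomes essential, since the analogous argument fails for countable cofinality and requires the genuinely different combinatorial argument behind Theorem~\ref{IndiscSingularCountCof}.
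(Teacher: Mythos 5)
Your proposal is correct and follows essentially the same route as the paper: decompose $X$ into blocks of increasing regular cardinality, apply Theorem~\ref{IndiscRegular} to each block over the union of the previous ones to get a Morley sequence $Y_\alpha$ over a base $A\overline{c}_\alpha$, apply Theorem~\ref{IndiscRegular} once more to the $\lambda$-sequence of parameters $(\overline{c}_\alpha)$ (this is exactly where $\lambda > \aleph_0$ is used, since that theorem needs a regular uncountable cardinal), and conclude $E$-indiscernibility by standard nonforking arguments. The only cosmetic differences are that the paper keeps the full blocks $X_\beta$ rather than just the pruned $Y_\beta$ in the bases, and that the final application of Theorem~\ref{IndiscRegular} yields a Morley sequence over $Aa_*$ for one further parameter $a_*$ rather than over $A$ alone; neither affects the argument.
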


\begin{proof}
We can suppose $T = T^{eq}$, and thus that $d = 1$.

Write $X$ as the disjoint union of $X_\alpha: \alpha < \lambda$, where each $|X_\alpha| = \kappa_\alpha < \kappa$ is a successor cardinal bigger than $|A|$, and $\kappa_\alpha < \kappa_\beta$ whenever $\alpha < \beta$. By applying Theorem~\ref{IndiscRegular} to each $X_\alpha$ and then pruning, we can suppose there is some $a_\alpha \in M$ and some stationary $p(x) \in S^1(a_\alpha)$, such that $Y_\alpha$ is an independent set of realizations of $p(x) | \left(A \cup \bigcup_{\beta < \alpha} X_\beta \right)$. Define the equivalence relation $E$ on $X$ by: $a  E b$ iff $a, b$ are in the same $X_\alpha$. 

For each $\alpha < \lambda$, choose $\phi_\alpha(x, a) \in p_\alpha(x)$ of the same Morley rank as $p(x)$, and of Morley degree $1$. By further pruning, we can suppose $\phi_\alpha(x, y) = \phi_\beta(x, y)$ for all $\alpha, \beta < \lambda$. 

Apply Theorem~\ref{IndiscRegular} and prune to get some $a_*$ and some stationary type $q(x) \in S^1(a_*)$, such that $\{a_\alpha: \alpha < \lambda\}$ is an independent set of realizations of $q(x) | A a_*$.

We claim now that $X$ is $E$-indiscernible. To check this, it is convenient to discard all but countably many elements of each $X_\alpha$. Thus enumerate each $X_\alpha = \{b^\alpha_n: n < \omega\}$. Now each $(b^\alpha_n: n < \omega)$ is a Morley sequence in $p_\alpha(x) | \left(A, a_\alpha, \bigcup_{\beta < \alpha} X_\beta \right)$, and $(a_\alpha: \alpha < \lambda)$ is a Morley sequence in $q(x) | A a_*$. It follows by typical nonforking arguments that for each $\alpha$, $X_\alpha$ is indiscernible over $A \cup \bigcup_{\beta \not= \alpha} X_\beta$; also, for each permutation $\sigma$ of $\lambda$, the permutation $\sigma_*$ of $X$ defined by $\sigma_*(b^\alpha_n) = b^{\sigma(\alpha)}_n$ is partial elementary over $A$. From these two facts it is easy to check that $X$ is $E$-indiscernible.
\end{proof}

Before the following theorem, we need some notation. Given $X$ a set and $r < \omega$, $\binom{X}{r}$ denotes the $r$-element subsets of $X$. When $X$ is a set of ordinals, each $s \in \binom{X}{r}$ has a canonical increasing enumeration; thus we can identify $\binom{X}{r} \subseteq X^r$.

\begin{theorem}\label{IndiscSingularCountCof}
Let $T$ be a stable theory, and let $\Delta$ be a finite collection of formulas of $T$. Let $\kappa$ be a singular cardinal, and let $d < \omega$. Then whenever $M \models T$, $A \subseteq M$ has size less than $\mbox{cof}(\kappa)$, and $X \subseteq M^d$ has size $\geq \kappa$, there is some $Y \subseteq X$ of size $\kappa$ and some equivalence relation $E$ on $Y$, such that $E$ has $\aleph_0$-many classes, with each equivalence class infinite, and such that $Y$ is $(\Delta, E)$-indiscernible over $A$. 
\end{theorem}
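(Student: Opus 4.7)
My plan is to extend the strategy of Theorem~\ref{IndiscSingularUncountCof} to arbitrary stable $T$ and arbitrary singular $\kappa$: iterate Theorem~\ref{IndiscRegular2} along a cofinal sequence to build a tower of locally $\Delta$-indiscernible pieces, and then use stability together with a Ramsey-theoretic combinatorial argument to synchronize joint $\Delta$-types across the pieces, producing $\aleph_0$ classes of the desired equivalence relation.

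Set $\lambda = \mbox{cof}(\kappa)$. Choose a strictly increasing cofinal sequence of successor cardinals $(\mu_\alpha : \alpha < \lambda)$ with $\mu_0 > |A|+\aleph_0$, and partition a subset of $X$ into disjoint $X_\alpha$ with $|X_\alpha| = \mu_\alpha$. By transfinite recursion on $\alpha < \lambda$, apply Theorem~\ref{IndiscRegular2} with regular cardinal $\mu_\alpha$ and base set $A \cup \bigcup_{\beta < \alpha} Y_\beta$ (of size less than $\mu_\alpha$) to extract $Y_\alpha \subseteq X_\alpha$ of size $\mu_\alpha$ that is $\Delta$-indiscernible over this base. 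This guarantees strong indiscernibility inside each piece together with downward compatibility with all earlier pieces.

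Next, for each equality-pattern $\pi$ on $k$ positions (recording which positions are to be drawn from the same piece), define a coloring $c_\pi : [\lambda]^k \to S_\Delta(A)$ sending $(\alpha_1, \ldots, \alpha_k)$ to the joint $\Delta$-type over $A$ of a representative tuple from $Y_{\alpha_1}, \ldots, Y_{\alpha_k}$ drawn according to $\pi$; the indiscernibility in the first step makes this well-defined. By stability the range of $c_\pi$ has cardinality $\leq |A| + \aleph_0$; crucially, when $\lambda = \omega$ the hypothesis $|A| < \mbox{cof}(\kappa) = \omega$ forces $|A|$ to be finite, so $|S_\Delta(A)|$ is itself finite. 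Apply a Ramsey-type extraction---Fodor's lemma when $\lambda > \omega$, iterated finite-color Ramsey when $\lambda = \omega$---diagonalized across the countably many equality-patterns, to obtain a cofinal subset $S \subseteq \lambda$ along which every $c_\pi$ is constant.

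Finally, assemble $Y$ and $E$. In the $\lambda = \omega$ case, let $Y = \bigsqcup_{n \in S} Y_n$ with $E$ the ``same piece'' relation: $|Y| = \kappa$, $E$ has $\aleph_0$ infinite classes, and $(\Delta, E)$-indiscernibility follows directly from the synchronization of joint types in the previous step. In the $\lambda > \omega$ case, a counting argument shows that some class must have size $\kappa$ (since $\aleph_0$ classes of size below $\kappa$ cannot sum to $\kappa$ when $\mbox{cof}(\kappa)>\omega$), so a separate construction is needed to produce a $\Delta$-indiscernible subset of $X$ of cardinality $\kappa$---likely by extending the tower using average-type/nonforking machinery within the stable theory, and then using the remaining pieces to furnish the other $\aleph_0-1$ small classes. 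The main obstacle is the Ramsey synchronization step, in particular in the countable cofinality case; the paper signals that the same combinatorial content admits a purely finitary analogue in Theorem~\ref{Finitary}, marking it as the novel contribution of the argument.
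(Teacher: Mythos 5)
Your overall architecture (decompose $X$ along a cofinal sequence into pieces $Y_\alpha$, make each piece $\Delta$-indiscernible over the earlier ones via Theorem~\ref{IndiscRegular2}, then synchronize cross-piece types) matches the paper's, but there is a genuine gap at the step you lean on hardest: the claim that ``the indiscernibility in the first step makes $c_\pi$ well-defined.'' It does not. Your construction makes $Y_\alpha$ indiscernible over $A \cup \bigcup_{\beta<\alpha} Y_\beta$, which lets you freely vary the representatives drawn from the \emph{topmost} piece of a cross-piece tuple; but for $\alpha_1 < \alpha_2$, two different elements of $Y_{\alpha_1}$ need not have the same $\Delta$-type over elements of $Y_{\alpha_2}$, since $Y_{\alpha_1}$ was only made indiscernible over pieces \emph{below} it. So the ``joint $\Delta$-type of a representative tuple'' genuinely depends on which representatives you choose from the lower pieces, and the coloring is not well-defined. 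This is exactly the difficulty the paper's proof is built around: for each $\alpha$ it constructs a nested tower $Y_\alpha^0 \supseteq \cdots \supseteq Y_\alpha^r$ together with pre-designated anchor tuples $a^i_{\alpha,j} \in Y_\alpha^i \setminus Y_\alpha^{i+1}$, and requires each $Y_\alpha^i$ to be indiscernible over the earlier pieces \emph{and over all anchors of strictly smaller superscript from every piece, including later ones}. One then shifts an arbitrary cross-piece tuple to the canonical anchor configuration block by block from the top down, each shift being licensed because everything else in the configuration lies in the relevant base set. Without some device of this kind (fixing parameters in advance and demanding indiscernibility over them), the synchronization step cannot even be formulated.

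Two further problems. First, your Ramsey step for $\lambda > \omega$ does not work as stated: you are coloring $[\lambda]^k$ with up to $|A|+\aleph_0 < \lambda$ colors, and partition relations such as $\lambda \rightarrow (\lambda)^2_2$ fail for successor cardinals (Sierpi\'nski), so neither Fodor nor a na\"ive Ramsey extraction yields a cofinal homogeneous set; the paper instead applies Theorem~\ref{IndiscRegular2} to the sequence of anchor tuples $(\mathbf{b}_\alpha : \alpha<\lambda)$, and then uses stability (Theorem~\ref{IndiscAleph0}: order-indiscernible implies indiscernible) to pass from increasing enumerations to arbitrary ones --- a point your equality-pattern bookkeeping also glosses over. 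Second, your treatment of the $\lambda>\omega$ case ends with ``a separate construction is needed, likely by average-type/nonforking machinery,'' which is not an argument; note that the paper's own proof simply produces $\mathrm{cof}(\kappa)$-many classes there (the ``$\aleph_0$-many'' in the statement should be read as ``infinitely many,'' consistent with the abstract and with Theorem~\ref{FiniteBasis}), so no such extra construction is attempted or needed.
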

\begin{proof}
Again, we can suppose $T =  T^{eq}$ and $d = 1$. Let $r$ be the maximum of the arities of formulas of $\Delta$.

Write $\lambda= \mbox{cof}(\kappa)$ and write $X$ as the disjoint union of $X_\alpha: \alpha < \lambda$, where each $|X_\alpha| = \kappa_\alpha < \kappa$ is a successor cardinal bigger than $|A|$, and $\kappa_{\alpha} < \kappa_{\alpha'}$ whenever $k < k'$. 

Then by many applications of Theorem~\ref{IndiscRegular}, we can find distinct $a^{i}_{\alpha, j}: \alpha < \lambda, i, j < r$ such that each $a^i_{\alpha, j} \in X_\alpha$, and we can find $Y_\alpha^i: \alpha < \omega, i \leq r$, such that:

\begin{itemize}
\item Each $Y_\alpha^{i+1} \subseteq Y_\alpha^i \subseteq X_\alpha$;
\item Each $Y_\alpha^i$ has size $\kappa_k$;
\item Each $a^{i}_{\alpha, j} \in Y_\alpha^i \backslash Y_\alpha^{i+1}$;
\item Each $Y_\alpha^i$ is indiscernible over $A \cup \bigcup_{\beta < \alpha} Y_\beta^i \cup \{a^{i'}_{\beta, j}: \beta < \lambda, i' < i, j < r\}$.
\end{itemize} 
For each $\alpha< \lambda$, let $\mathbf{b}_\alpha = (a_{\alpha, j}^i: i, j < r)$. By applying Theorem~\ref{IndiscRegular2}, we can suppose that $(\mathbf{b}_\alpha: \alpha < \lambda)$ is $\Delta$-indiscernible over $A$. Given an injective $s: r \to \lambda$, let $p_s(x_{i, j}: i, j < r) = tp_\Delta(a_{s(i), j}^{r-1-i}: i, j < r)$. This is a $\Delta$-type of a finite tuple over the empty set; in particular it is equivalent to a formula, but it is more convenient to write it as a type. Note that by $\Delta$-indiscernibility, for all $s, s'$, $p_s(x_{i, j}: i , j < r) = p_{s'}(x_{i, j}: i, j < r)$, so we can drop the subscripts and refer to just $p(x_{i, j}: i, j<r)$.

For each $\alpha$, write $Y_\alpha = Y_\alpha^{r}$ and write $Y = \bigcup_\alpha Y_\alpha$. We claim that $Y$ is $(\Delta, E \restriction_Y)$-indiscernible. 

Note that for all $s \in \binom{\omega}{r}$ and for all distinct $a_{s(i), j}: i, j < r$, with $a_{s(i), j} \in Y_{s(i)}$, we have that  $M \models p(a_{s(i), j}: i, j < r)$. This is because, starting from $(a_{s(r-1), j}: j<r)$ and moving downwards, we can shift each $(a_{s(i), j}: j < r)$ to $(a^{r-1-i}_{s(i), j}: j<r)$; when moving $(a_{s(i), j}: j < r)$ to $(a^{r-1-i}_{s(i), j}: j < r)$ we are using the indiscernibility hypothesis on $Y_{s(i)}^{r-1-i}$.   

We now need to take care of the fact that we are only looking at the increasing enumeration of $s$ in the above.

Choose distinct $(a_{\alpha, j}: \alpha < \lambda, j < r)$ with each $a_{\alpha, j} \in Y_k$. Write $\mathbf{a}_\alpha = (a_{\alpha, j}: j < r)$. By the preceding, we have that $(\mathbf{a}_\alpha: \alpha < \lambda)$ is order-$\Delta$-indiscernible; but by Theorem~\ref{IndiscAleph0}, this implies that $(\mathbf{a}_\alpha: \alpha < \lambda)$ is fully indiscernible. In particular, given any injective sequence $s: r \to \lambda$, and given distinct $a_{s(i), j}: i, j < r$, with $a_{s(i), j} \in Y_{s(i)}$, we have that $M \models p(a_{s(i), j}: i, j < r)$ (since we could have chosen $(\mathbf{a}_\alpha: \alpha < \lambda)$ to cover $\mbox{range}(s)$). From this it follows easily that $Y$ is $(\Delta, E \restriction_Y)$-indiscernible. 
\end{proof}

The following theorem is an easy consequence of Theorems~\ref{IndiscRegular2} and~\ref{IndiscSingularCountCof}.

\begin{theorem}\label{FiniteBasis}
Suppose $T$ is stable, $M \models T$, $\Delta$ is a finite set of formulas,  $d< \omega$, and $\kappa$ is an infinite cardinal. Then there is a finite list $(C_i: i < i_*)$ such that: each $C_i \subseteq M^d$ has size $\kappa$, and for every $X \subseteq M^d$ of size $\kappa$, there is some $Y \subseteq X$ of size $\kappa$ and some $i < i_*$ such that $tp_{\Delta}(C_i) = tp_{\Delta}(Y)$ (i.e. there is a bijection $f: C_i \to Y$ that preserves $\Delta$-formulas). If $\kappa$ is regular, then each $C_i$ is $\Delta$-indiscernible; otherwise, each $C_i$ is $(\Delta, E_i)$-indiscernible for some equivalence relation $E_i$ on $C_i$ with infinitely many classes, each class infinite.
\end{theorem}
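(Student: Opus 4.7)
The plan is to derive Theorem \ref{FiniteBasis} as a direct consequence of Theorems \ref{IndiscRegular2} and \ref{IndiscSingularCountCof}, using the observation that being $(\Delta, E)$-indiscernible (with $E$ having infinitely many infinite classes, or trivially in the regular case) forces $tp_\Delta(Y)$ to take only finitely many values. The only non-trivial wrinkle is standardizing the multiset of $E$-class sizes in the singular case, so that abstract type-equality upgrades to an actual $\Delta$-preserving bijection. Let $r$ denote the maximum arity of formulas in $\Delta$.

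For $\kappa$ regular, given $X \subseteq M^d$ of size $\kappa$, Theorem \ref{IndiscRegular2} (with $A = \emptyset$) yields a $\Delta$-indiscernible $Y \subseteq X$ of size $\kappa$. Because $Y$ is $\Delta$-indiscernible, $tp_\Delta(Y)$ is determined by, for each $n \le r$, the $\Delta$-type of some $n$-tuple of distinct elements of $Y$; each such $\Delta$-type is itself just a finite bit-string recording which formulas of $\Delta$ hold on the tuple. With $r$ and $|\Delta|$ finite, this datum ranges over a finite set, so $tp_\Delta(Y)$ takes only finitely many possible values, and any two $\Delta$-indiscernible sets of size $\kappa$ realizing the same value admit a $\Delta$-preserving bijection via any cardinality-matching bijection. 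Pick one representative $C_i \subseteq M^d$ per realized value.

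For $\kappa$ singular (hence uncountable), Theorem \ref{IndiscSingularCountCof} yields $Y \subseteq X$ of size $\kappa$ with an equivalence relation $E$ having $\aleph_0$ infinite classes such that $Y$ is $(\Delta, E)$-indiscernible. The $\Delta$-type of any tuple of distinct elements of $Y$ now depends only on the partition of $[n]$ induced by $E$, and every such partition is realized, so the abstract $(\Delta,E)$-type of $Y$, namely the function sending each partition of each $[n]$, $n \le r$, to the corresponding $\Delta$-type, ranges over a finite set. To upgrade abstract type-equality to an honest $\Delta$-preserving bijection, I would further prune $Y$ inside its classes to match a canonical multiset of class sizes fixed in advance for $\kappa$: if $\mbox{cof}(\kappa) > \aleph_0$ then at least one class of $Y$ must have size $\kappa$ (a countable union of sets of size $< \kappa$ cannot exhaust $\kappa$), and we prune down to exactly one class of size $\kappa$ together with countably many of size $\aleph_0$; if $\mbox{cof}(\kappa) = \aleph_0$ then the class sizes of $Y$ are necessarily cofinal in $\kappa$, and we prune to a fixed cofinal $\omega$-sequence $(\kappa_n)$, picking distinct classes of sufficient size one at a time. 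Taking one $C_i$ per realized abstract type in the appropriate canonical shape gives the finite list. The main obstacle is exactly this class-size standardization; everything else is elementary counting.
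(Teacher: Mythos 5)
Your argument is correct and is exactly the derivation the paper intends: the paper gives no proof beyond remarking that the theorem is an easy consequence of Theorems~\ref{IndiscRegular2} and~\ref{IndiscSingularCountCof}, and your finiteness count on partition-to-$\Delta$-type functions plus the class-size standardization is precisely the routine work being elided. One small correction in the $\mbox{cof}(\kappa)=\aleph_0$ case: the statement of Theorem~\ref{IndiscSingularCountCof} does not rule out a single $E$-class of size $\kappa$ with all remaining classes countable, in which case the class sizes need not support your greedy pruning to a fixed cofinal sequence; the fix is to allow two canonical shapes (one class of size $\kappa$ plus countably many countable classes, or classes of sizes forming a fixed cofinal $\omega$-sequence), which at worst doubles $i_*$ and still yields a finite list.
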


We give a purely finitary analogue of Theorem~\ref{IndiscSingularCountCof}. First, given $n, r, c < \omega$, an equivalence relation $E$ on $n$ and a function $f: \binom{n}{r} \to c$, say that $X \subseteq n$ is $E$-homogeneous for $f$ if for all $s, t \in \binom{X}{r}$, if $s(i) E s(j)$ iff $t(i) E t(j)$ for all $i, j < r$, then $f(s) = f(t)$. Also, given $A \subseteq Y \subseteq n$, say that $A$ is convex in $Y$ if whenever $n_0 < n_1 < n_2 < n$, if $n_0, n_2 \in A$ and $n_1 \in Y$ then $n_1 \in A$.

Theorem~\ref{Finitary} also follows from the Claim (proved below) and the fact that convexly ordered equivalence relations form a Ramsey class, see \cite{RamseyClasses}. On the other hand, the given proof of Theorem~\ref{Finitary} extends easily to handle infinite cardinals, using the \Erdos-Rado theorem in place of Ramsey's theorem.

\begin{theorem}\label{Finitary}
Suppose $K, L, r, c < \omega$ are given. Then there are $K_*, L_* < \omega$ large enough so that whenever $n \geq K_* \cdot L_*$, and whenever $E$ is an equivalence relation on $n$ with at least $K_*$ many classes, of size at least $L_*$, and whenever $f: \binom{n}{r} \to c$, there is some $X \subseteq n$ which is $E$-homogeneous for $f$ such that $E \restriction_X$ has at least $K$ many classes, each convex in $X$ and of size at least $L$.
\end{theorem}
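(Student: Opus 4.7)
The plan is to mirror the proof of Theorem~\ref{IndiscSingularCountCof} in a purely finitary setting, using Ramsey's theorem wherever the model-theoretic proof used Theorem~\ref{IndiscRegular} or~\ref{IndiscRegular2}; the parameters $K_*, L_*$ will end up as towers of Ramsey numbers of height roughly $r+1$. I first perform a convexity reduction. Observe that among any $K_*$ disjoint sets of size $\geq L'$, the first $K_* L'$ elements of $n$ must contain an $L'$-subset of some single class (by pigeonhole). Iterating this greedy step, with $L_*$ chosen large enough that each remaining class still has $\geq L'$ elements above the current threshold, yields $K$ classes $C_{\alpha_1}, \ldots, C_{\alpha_K}$ and $L'$-subsets $D_{\alpha_1} < D_{\alpha_2} < \cdots < D_{\alpha_K}$ that are non-interleaving in the natural order of $n$. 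Any subset of $\bigcup_j D_{\alpha_j}$ then has each class-intersection convex in itself.

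Next, within each $D_{\alpha_j}$, apply Ramsey's theorem $r$ times to construct a descending chain $D_{\alpha_j} = Y^0_{\alpha_j} \supseteq \cdots \supseteq Y^r_{\alpha_j}$ with $|Y^r_{\alpha_j}| \geq L$, together with witnesses $a^i_{\alpha_j,k} \in Y^i_{\alpha_j} \setminus Y^{i+1}_{\alpha_j}$ for $i,k<r$. At level $i$, the Ramsey call is on the coloring of $\binom{Y^i_{\alpha_j}}{r}$ that encodes the tuple of $f$-values on all ``mixed'' $r$-subsets formed with any specified context of previously chosen witnesses and elements of other classes at higher levels. This is the finitary analogue of the condition that each $Y^i_\alpha$ be indiscernible over the growing parameter set in Theorem~\ref{IndiscSingularCountCof}, and it forces $L'$ to dominate an iterated Ramsey number of height $r$.

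Finally, form the witness tuple $\mathbf{b}_{\alpha_j} = (a^i_{\alpha_j,k})_{i,k<r}$ and apply Ramsey to $r$-subsets of $\{\alpha_1,\ldots,\alpha_K\}$ colored by the $f$-pattern of the associated $\mathbf{b}$-tuples, obtaining cross-class order-homogeneity; an additional Ramsey step (playing the role of the stability-based upgrade via Theorem~\ref{IndiscAleph0}) makes the pattern symmetric on the $K$ surviving classes. To verify $E$-homogeneity for $Y = \bigcup_j Y^r_{\alpha_j}$, I run the shifting argument of Theorem~\ref{IndiscSingularCountCof}: given $s \in \binom{Y}{r}$, replace each entry $s(i)$ (from largest level downward) by the appropriate witness $a^{r-1-i}_{\alpha_{j_i},k}$, invoking the within-class Ramsey homogeneity at that level to preserve $f(s)$; cross-class homogeneity of the $\mathbf{b}$-tuples then finishes. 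The main obstacle is bookkeeping: at each level, the coloring passed to Ramsey depends on a context of earlier fixed elements, so $K_*$ and $L_*$ must be chosen as towers of Ramsey numbers large enough to leave sufficient room for all subsequent applications, but no new combinatorial ideas beyond iterated Ramsey and shifting are needed.
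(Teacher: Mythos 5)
Your main argument is the same as the paper's: iterated applications of Ramsey's theorem with parameters inside each class to build the chains $Y^0_k \supseteq \cdots \supseteq Y^r_k$ with witnesses $a^i_{k,j}$, a cross-class Ramsey application to the coloring of $r$-subsets of classes by the $f$-pattern of the witness tuples, and then the level-by-level shifting argument to verify $E$-homogeneity. Where you genuinely diverge is the convexity reduction. The paper proves this as a separate Claim using two further Ramsey applications (one to stabilize the interleaving pattern of pairs of classes, one to decide whether classes are separated or regularly interleaved) followed by a diagonal extraction of blocks $Z_k$ in the interleaved case. Your greedy pigeonhole extraction of separated blocks $D_{\alpha_1} < \cdots < D_{\alpha_m}$ is simpler and correct: at each step at most $K_* L'$ elements are passed over, so taking $L_*$ larger than (number of steps)$\cdot K_* L' + L'$ keeps every unused class alive, and the resulting blocks are convex in any subset. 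This buys smaller bounds and avoids the case analysis; what it loses relative to the paper's Claim is nothing for the purposes of this theorem.

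Two corrections. First, a bookkeeping slip that does matter: your greedy step must produce not $K$ classes but $R$ classes with $R \rightarrow (K+r)^r_{c'}$, since the cross-class Ramsey application thins the set of classes; moreover you need to keep $K+r$ homogeneous classes and use only the first $K$, because in the verification step a set $s$ meeting $i_* \leq r$ classes must have its class-set extended \emph{upward} to a full $r$-subset of the homogeneous family before the homogeneity of the witness-coloring can be invoked. You gesture at "leaving room" but this $+r$ padding is a specific requirement, not just a larger tower. Second, your "additional Ramsey step \ldots{} makes the pattern symmetric" is not a step that exists: Ramsey's theorem only homogenizes over increasing enumerations, and no finitary argument can force a coloring of ordered tuples to be permutation-invariant. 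Fortunately it is also unnecessary here: $f$ colors unordered $r$-sets, $E$-homogeneity compares sets via their canonical increasing enumerations, and after the convexity reduction the class-blocks of any $s \in \binom{Y}{r}$ occur in increasing order, so only order-homogeneity across classes is ever used. This is precisely the point at which Theorem~\ref{Finitary} is easier than Theorem~\ref{IndiscSingularCountCof}, where the passage from order-indiscernibility to full indiscernibility genuinely requires stability via Theorem~\ref{IndiscAleph0}. Drop the symmetrization step and your outline matches a correct proof.
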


First, we want the following claim.

\vspace{1 mm}
\noindent \textbf{Claim.} Suppose $K, L < \omega$ are given. Then there are $K_*, L_* < \omega$ large enough so that whenever $n \geq K_* \cdot L_*$, and whenever $E$ is an equivalence relation on $n$ with at least $K_*$ many classes, each of size at least $L_*$, there is some $X \subseteq n$ such that $E \restriction_X$ has at least $K$ many classes, each convex in $X$ and of size at least $L$.
\begin{proof}
Choose $K_*$ such that $K_* \rightarrow (K)^2_{2}$. Choose $L_*$ such that $L_* \rightarrow (L^2)^2_{(2K_*)!}$.

Suppose $E$ is given. We can suppose $E$ is an equivalence relation on $n$ with exactly $K_*$-many classes, each of size exactly $L_*$; so $n = K_* L_*$. Let $(X_k: k < K_*)$ list the equivalence classes of $E$ in some order. For each $k < K_*$, $\ell < L_*$, let $X_k(\ell)$ denote the $\ell$'th element of $X_k$ (listed in increasing order). Define a map $f: \binom{L_*}{2} \to (2K_*)!$, where $f(\ell_0, \ell_1)$ codes the ordering of the elements $(X_{k}(\ell_i): k < K_*, i < 2)$. Choose $I \subseteq L_*$ of size $L^2$ which is homogeneous for $f$. 

Then we have the following: suppose $k_0, k_1 < K_*$. Then one of the following holds, after possibly switching $k_0$ and $k_1$: either $X_{k_0}(\ell_0) < X_{k_1}(\ell_1)$ for all $\ell_0, \ell_1 \in I$; or else $X_{k_0}(\ell_0) < X_{k_1}(\ell_0) < X_{k_0}(\ell_1)$ for all $\ell_0 < \ell_1$ both in $I$. Define $g: \binom{K_*}{2} \to 2$ so that $g(\{k_0, k_1\})$ says which of these cases $\{k_0, k_1\}$ is in (say $0$ for the first case, $1$ for the second case). Choose $J \subseteq K_*$ homogeneous for $g$ of size $K$. Let $X = \{X_k(\ell): k \in J, \ell \in I\}$. 

If $J$ is homogeneous of color $0$, then the classes on $X \restriction_E$ are already convex in $X$ and so we are done. Otherwise, given equivalence classes $Y, Y'$ of $E \restriction_X$, say that $Y <_* Y'$ if $Y(\ell) < Y'(\ell)$ for some or any $\ell < L^2$; let $(Y_k: k < K)$ list the equivalence classes of $E \restriction_X$ in $<_*$-increasing order. Note that for all $\ell_0 < \ell_1 < L^2$, and for all $k_0 < k_1 < K$, $Y_{k_0}(\ell_0) < Y_{k_1}(\ell_0) < Y_{k_0}(\ell_1) < Y_{k_1}(\ell_1)$. Let $Z_k = \{Y_k(Lk), \ldots, Y_k(L(k+1) - 1)\}$. Then clearly $Z= \bigcup_{k < K} Z_k$ works.
\end{proof}

Thus, to prove Theorem~\ref{Finitary}, it suffices to restrict to equivalence relations $E$ such that each class is convex in $n$. Note then that whenever $X \subseteq n$, each class of $E \restriction_X$ will be convex in $X$.

We will need some notation. Given a function $f: \binom{n}{r} \to c$, and given a set of parameters $A \subseteq n$, say that $X \subseteq n$ is homogeneous for $f$ over $A$ if for every $s \in \binom{A}{< r}$, the induced function $f_s: \binom{n}{r-|s|} \to c$ is constant on $X$. For the purposes of this theorem, say that $n \rightarrow (m)^{r}_{c, p}$ if: whenever $f: \binom{n}{r} \to c$, and whenever $A \subseteq n$ has size at most $p$, there is $X \subseteq n$ of size $m$ which is homogeneous for $f$ over $A$. Easily, if $n \rightarrow (m)^{r}_{c'}$, where $c' = c^{p^r 2^r}$, then $n \rightarrow (m)^{r}_{c, p}$.

We now are ready to prove Theorem~\ref{Finitary}.

\begin{proof}
We follow the proof of Theorem 2.5. Choose $K_*$ so that $K_* \rightarrow (K+r)^{r}_{c'}$, where $c' = c^{r^{2r}}$. Choose numbers $(L_{k}^i: k < K_*, -1 \leq i \leq r)$ such that:
\begin{itemize}
\item For all $0 \leq i \leq r$ and for all $k < K_*$, $L_{k}^{i-1} \rightarrow (L_{k}^{i})^{r}_{c, c'}$, where $c' = i\cdot r \cdot K_* + \sum_{k' < k} L_{k'}^{i}$.
\item  Each $L^r_k = L$.
\end{itemize}
Set $L_* = L_{0}^{-1}$ (which we can suppose is the maximum of $(L_{k}^{-1}: k < K_*)$).  Then we claim this works. 

Indeed, suppose $E$ is an equivalence relation on $N$ with at least $K_*$ many classes, each convex of size at least $L_*$, and suppose $f: \binom{n}{r} \to c$. Let $X_k: k < k_*$ list in increasing order the first $K_*$-many classes of $E$. By choice of $(L^i_k: -1 \leq i \leq r, k < K_*)$, we can find $Y^i_k: 0 \leq i \leq r, k < K_*$ and distinct $a^i_{k, j}: 0 \leq i, j < r, k < K_*$ such that:

\begin{itemize}
\item For each $0 \leq i < r$ and $k < K_*$, $Y^{i+1}_k \subseteq Y^{i}_k \subseteq X_k$;
\item For each $0 \leq i \leq r$ and $k < K_*$, $|Y^i_k| = L^i_k$;
\item For each $0 \leq i < r$, $k < K_*$ and $j < r$, $a^{i}_{k, j} \in Y^i_k \backslash Y^{i+1}_k$;
\item Suppose $0 \leq i \leq r, k < K_*$; set $A = \bigcup_{k' < k} Y^{i}_{k'} \cup \{a^{i'}_{k', j'}: 0 \leq i' < i, k' < K_*, j' < r\}$. Then $Y^{i}_k$ is homogeneous for $f$ over $A$.
\end{itemize}

Given $\overline{a} = (a_i: i \in I)$ an injective sequence from $n$, by $\mbox{tp}(s)$ we mean the function $\binom{I}{r} \to c$ induced from $f: \binom{\{a_i: i \in I\}}{r} \to c$.

Write $c' = c^{r^{2r}}$ (as in the definition of $K_*$) and choose $g: \binom{K_*}{r} \to c'$ so that for all $s \in \binom{K_*}{r}$, $g(s)$ codes $tp(a^{r-1-i}_{s(i), j}: i, j < r)$. By choice of $K_*$, we can find $I' \subseteq K_*$ of size $K+r$, which is homogeneous for $g$. Let $I$ be the first $K$-many elements of $I'$. Write $Y = \bigcup_{k \in I} Y^r_k$. Then we claim $Y$ is $E$-homogeneous for $f$.

Indeed, suppose $s, t \in \binom{Y}{r}$, such that for all $i, j < r$, $s(i) E s(j)$ iff $t(i) E t(j)$. We want to show that $f(s) = f(t)$.  Write $s$ as the disjoint union of its equivalence classes listed in increasing order: $s = \bigcup_{i < i_*} s_i$, and similarly $t = \bigcup_{i < i_*} t_i$. Note each $|s_i| = |t_i| = r_i$, say. For each $i < i_*$, let $k_i \in I$ be such that $s_i \subseteq Y_{k_i}^r$, and let $k'_i \in I$ be such that $t_i \subseteq Y_{k'_i}^r$.

Let $s'_i = \{a^{i_*-1-i}_{k_i, j}: j < r_i\}$ and let $t'_i = \{a^{i_*-1-1}_{k'_i, j}: j < r_i\}$. Let $s' = \bigcup_{i < i_*} s'_i$ and let $t' = \bigcup_{i < i_*} t'_i$. Note that by choice of $I$ and $I'$, we have that $f(s') = f(t')$. (We can choose $u \in \binom{I'}{r}$ such that $\{k_i: i < i_*\}$ are the first $i_*$-many elements of $u$, and $v \in \binom{I'}{r}$ such that $\{k'_i: i < i_*\}$ are the first $i_*$-many elements of $v$. Then apply the definition of $g$.) So by symmetry, it suffices to show that $f(s) = f(s')$.

Starting with $i = i_*-1$, shift each $s_i$ to $s'_i$; at each step, we do not change the value of $f$ by our homogeneity assumption on $Y^{i_* -1-i}_{k_i}$. 
\end{proof}

\section{Getting Large Strictly Rainbow Sets}\label{kappaRainbow}
In this section, we are interested in applying the machinery of the previous section to analyze $a$-ary-volumes of subsets of $\mathbb{R}$. Recall that we are interested in the following kind of problem: given $X \subseteq \mathbb{R}^d$ infinite and given $a \leq d+1$, can we find $Y \subseteq X$ with $|Y| = |X|$, such that all distinct $a$-element sets from $Y$ give distinct volumes?

The most natural structure to work in for this would be $(\mathbb{R},+, \cdot, 0, 1)$, but the first-order theory of this structure is unstable. Thus we view $\mathbb{R} \subseteq \mathbb{C}$ and work in the larger field $(\mathbb{C}, +, \cdot, 0, 1)$ instead; it is well known that its first order theory, $ACF_0$, is $\omega$-stable. We could alternatively look under the hood of Theorem~\ref{IndiscRegular2} and note that it applies to $\mbox{Th}(\mathbb{R}, +, \cdot, 0, 1)$ provided $\Delta$ is taken to be a set of quantifier-free formulas, but this really amounts to the same thing.

First we need to show that the relevant notions of $a$-ary volumes are definable in $\mathbb{C}$.

\begin{definition}\label{CayleyMenger}
Let $a \leq d+1 < \omega$. Note that for $(v_0, \ldots, v_{a-1}) \in (\mathbb{R}^d)^{a+1}$, the $a-1$-ary volume of $(v_0, \ldots, v_{a-1})$ is $\frac{1}C |q_a(v_0, \ldots, v_{a-1})|$ for some constant $C$ and some polynomial $q_a(v_0, \ldots, v_{a-1})$ (namely the Cayley-Menger determinant). Thus $(v_0, \ldots, v_{a-1})$ has $a-1$-ary volume $0$ iff $q_a(v_0, \ldots, v_{a-1}) = 0$. Let $p_a(v_0, \ldots, v_{a-1}, w_0, \ldots, w_{a-1})$ be the polynomial in $2da$ variables given by $(q_a(v_0, \ldots, v_{a-1}) - q_a(w_0, \ldots, w_{a-1}))(q_a(v_0, \ldots, v_{a-1}) +q_a(w_0,\ldots, w_{a-1}))$.  Note that for $(v_0, \ldots, v_{a-1}), (w_0, \ldots, w_{a-1})$ from $(\mathbb{R}^d)^a$, we have that their $a-1$-ary volumes are the same if and only if $p_a(v_0, \ldots, v_{a-1}, w_0, \ldots, w_{a-1}) = 0$.

Let $\Delta_d$ be the finite collection of formulas of $ACF_0$ of the form: $q_a(\overline{v}) = 0$, or $p_a(\overline{v}, \overline{w}) = 0$, for $a \leq d+1$.
\end{definition}

Thus, Theorem~\ref{FiniteBasis} gives, for each infinite cardinal $\kappa$, a finite basis of all possible $\Delta_d$-configurations, and we wish to understand the ones that can be embedded in $\mathbb{R}^d$. In the case where $\kappa$ is regular, we succeed completely with Theorem~\ref{RainbowRegular}. First we need the following lemma.

\begin{lemma}\label{RainbowLemma}
Let $d$ be given. Suppose $X \subseteq \mathbb{R}^d$ is infinite, and when viewed as a subset of $\mathbb{C}^d$, is $\Delta_d$-indiscernible. Then $X$ is strictly $a$-rainbow for some $2 \leq a \leq d+1$.
\end{lemma}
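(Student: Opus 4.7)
I would identify the natural value of $a$ from the indiscernibility data, verify the two easy clauses of ``strictly $a$-rainbow'' immediately, and then reduce the remaining distinctness clauses to ruling out one degenerate geometric configuration. Let $a$ be the largest integer with $2 \leq a \leq d+1$ such that some $a$-subset of $X$ has nonzero $(a-1)$-volume; positivity of pairwise distances gives $a \geq 2$, and $a \leq d+1$ is automatic. By $\Delta_d$-indiscernibility applied to the formula $q_a(\overline{v})=0$, either all or no $a$-subsets of $X$ have zero $(a-1)$-volume, so this choice of $a$ forces every $a$-subset of $X$ to be nondegenerate, and (if $a \leq d$) every $(a+1)$-subset of $X$ to have zero $a$-volume. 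The latter gives the hyperplane clause: pick any nondegenerate $a$-subset of $X$, let $H$ be its $(a-1)$-dimensional affine hull, and note that each further $u \in X$ joins the chosen $a$ points to form an affinely dependent $(a+1)$-subset, forcing $u \in H$. Similarly, $\Delta_d$-indiscernibility of $q_{a'}=0$ together with the maximality of $a$ rules out that all $a'$-subsets are degenerate for any $2 \leq a' \leq a$, since extending a degenerate $a'$-subset by $a-a'$ further elements of $X$ would yield an affinely dependent $a$-subset, contradicting the choice of $a$.

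It remains to show, for each $2 \leq a' \leq a$, that all $a'$-subsets of $X$ yield \emph{distinct} $(a'-1)$-volumes. By $\Delta_d$-indiscernibility applied to $p_{a'}(\overline{v}, \overline{w})=0$, the truth value of ``$\mbox{vol}_{a'-1}(\overline{v}) = \mbox{vol}_{a'-1}(\overline{w})$'' is constant on $2a'$-tuples of distinct elements of $X$. Infinitude of $X$ lets me propagate this constant either to ``every pair of $a'$-subsets has equal volume'' or ``every pair of $a'$-subsets has distinct volumes'': I use chains $S_0, S_1, \ldots, S_n$ of $a'$-subsets of $X$ with controlled overlap sizes, noting that by indiscernibility (applied to $p_{a'}$ with repeated parameters) the truth of ``$\mbox{vol}_{a'-1}(S) = \mbox{vol}_{a'-1}(T)$'' depends only on $|S \cap T|$. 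In the ``distinct'' case $X$ is strongly $a'$-rainbow and we are done; in the ``equal'' case all $a'$-subsets of $X$ share a common $(a'-1)$-volume $V$, which is nonzero by the preceding paragraph.

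The main obstacle is then to rule out this ``equal'' case by a geometric argument. Fixing $v_1, \ldots, v_{a'-1} \in X$ affinely independent with $(a'-2)$-dimensional affine hull $H$, the base-times-height formula
\[
\mbox{vol}_{a'-1}(v_1, \ldots, v_{a'-1}, u) = \tfrac{1}{a'-1} \cdot \mbox{vol}_{a'-2}(v_1, \ldots, v_{a'-1}) \cdot \mbox{dist}(u, H)
\]
forces every other $u \in X$ onto the codimension-$1$ tube $\{x : \mbox{dist}(x, H) = r\}$ for a fixed $r > 0$. For $a'=2$ this tube is a sphere, and an equidistant subset of $\R^{a-1}$ has at most $a$ elements, immediately contradicting $|X| = \infty$. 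For larger $a'$, varying the choice of base $(a'-1)$-subset yields an infinite family of codimension-$1$ tubes, each containing all but finitely many elements of $X$; a dimension count (intersecting $(a-1)$-many tubes with sufficiently generic bases inside the ambient $\R^{a-1}$ cuts out a zero-dimensional algebraic set) again contradicts $|X| = \infty$. Hence $X$ is strongly $a'$-rainbow for every $a' \leq a$, and combined with the hyperplane clause this shows $X$ is strictly $a$-rainbow.
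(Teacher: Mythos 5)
Your choice of $a$, the verification of the hyperplane and nondegeneracy clauses, and the reduction of the distinctness clauses to a dichotomy (``all $a'$-subsets of $X$ have equal volume'' versus ``all have distinct volumes'') via indiscernibility and overlap-chaining all track the paper's argument in substance. The divergence, and the gap, is in how you dispose of the ``all equal'' case when $a' \geq 3$. Your dimension count asserts that intersecting $a-1$ of the tubes $\{x : \mathrm{dist}(x,H)=r_H\}$ ``with sufficiently generic bases'' cuts out a zero-dimensional algebraic set, but the bases are forced to be $(a'-1)$-subsets of $X$ itself, and $X$ is exactly the hypothetical pathological set: every one of these tubes contains all but finitely many points of the infinite set $X$, so their common intersection is infinite no matter how many tubes you take. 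The content of the lemma at this point is precisely that such a non-transverse family of tubes cannot arise, so invoking genericity of the bases begs the question. You would need to show that the equal-volume hypothesis still forces enough transversality, or find another source of contradiction (for instance a metric one: if $X$ is bounded, two points of $X$ very close together force every other point farther from the flat through them than the diameter of $X$ permits; if $X$ is unbounded, pairs at great distance force all of $X$ into arbitrarily thin neighborhoods of flats through a fixed point, contradicting nondegeneracy). The $a'=2$ subcase (infinite equilateral sets do not exist) is fine as you state it.

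For comparison, the paper avoids real geometry entirely at this last step: it works in $\mathbb{C}^d$, uses indiscernibility to reduce to the statement that any two $a'$-sets sharing all but one element have equal volume, and then builds a descending chain of prevarieties (zero sets of the polynomials $p_{a'}$ with parameters drawn from $X$), aiming to contradict the Noetherianity of $\mathbb{C}[x_1,\ldots,x_d]$ via Hilbert's basis theorem; the same device reappears in Theorems~\ref{SingularRainbow} and~\ref{th:perfect}. The advantage of that route is that it converts ``infinitely many points impose infinitely many polynomial conditions'' into a contradiction without having to control the geometric position of the tubes at all, which is exactly the control your sketch assumes but does not supply.
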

\begin{proof}
Let $a$ be largest so that for some or any $v_0, \ldots, v_{a-1} \in X$, we have that $q_a(v_0, \ldots, v_{a-1}) \not= 0$. Then clearly $X$ is a subset of the $a-1$-dimensional hyperplane spanned by any $a$ elements from $X$, and for every $2 \leq a' \leq a$, every $a'$-subset of $X$ has nondegenerate volume, so it suffices to show that $X$ is $a'$ rainbow for all $2 \leq a' \leq a$. Suppose not; say $(v_0, \ldots, v_{a'-1})$ and $(w_0, \ldots, w_{a'-1})$ are from $\binom{X}{a'}$ of the same volume, that is $q_{a'}(v_0, \ldots, v_{a'}-1) = \pm q_{a'}(w_0, \ldots, w_{a'-1})$. We can suppose there is $\ell < a'-1$ such that $v_i = w_i$ for all $i < \ell$, and $v_i \not= w_j$ for any $i, j \geq \ell$. By $a'-\ell$-many applications of indiscernibility of $X$, we can suppose $\ell = a'-2$, or in other words: for all $v_0, \ldots, v_{a'-1}, v_{a'}$ from $X$ distinct, $q_{a'}(v_0, \ldots, v_{a'-2}, v_{a'-1}) = \pm q_{a'}(v_0, \ldots, v_{a'-2}, v_{a'})$. Let $v_0, \ldots, v_{a'-2}, w_n: n < \omega$ be distinct elements from $X$. For each $n < \omega$ let $V_n \subseteq \mathbb{C}^d$ be the set of all $v$ such that $q_{a'}(v_0, \ldots, v_{a'-2}, v) = \pm q_{a'}(v_0, \ldots, v_{a'-2}, w_{n'})$ for all $n' < n$. Clearly this is a descending sequence of prevarieties, and moreover for each $n$, $w_n \in A_n \backslash A_{n+1}$ so it is strict. This contradicts Hilbert's Basis theorem.
\end{proof}

\begin{theorem}\label{RainbowRegular}
Suppose $\kappa \leq 2^{\aleph_0}$ is a regular cardinal, and $X \subseteq \mathbb{R}^d$ has $|X| = \kappa$. Then there is $Y \subseteq X$ of size $\kappa$, and some $2 \leq a \leq d+1$, such that $Y$ is strictly $a$-rainbow.
\end{theorem}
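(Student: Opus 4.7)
The plan is to combine Theorem~\ref{IndiscRegular2} with Lemma~\ref{RainbowLemma}: first extract a $\Delta_d$-indiscernible subset of size $\kappa$, then apply the geometric lemma to conclude strict $a$-rainbowness. Concretely, I would view $X \subseteq \mathbb{R}^d$ as a subset of $\mathbb{C}^d$, and work in the theory $ACF_0 = \mathrm{Th}(\mathbb{C}, +, \cdot, 0, 1)$. This theory is $\omega$-stable and hence stable, and the formulas in $\Delta_d$ from Definition~\ref{CayleyMenger} are quantifier-free polynomial (in)equalities whose truth values on tuples from $\mathbb{R}^d$ are the same in $\mathbb{R}$ and in $\mathbb{C}$.

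With the setup in place, I would apply Theorem~\ref{IndiscRegular2} to $T = ACF_0$, $\Delta = \Delta_d$, $A = \emptyset$, $M = \mathbb{C}$, and the given $X \subseteq \mathbb{C}^d$ of regular size $\kappa$. This produces $Y \subseteq X$ with $|Y| = \kappa$ such that $Y$ is $\Delta_d$-indiscernible (over $\emptyset$) in $\mathbb{C}$. Since $\Delta_d$ consists of quantifier-free formulas, $Y$ is in particular $\Delta_d$-indiscernible when viewed back inside $\mathbb{R}^d$ as in the hypothesis of Lemma~\ref{RainbowLemma}. Then Lemma~\ref{RainbowLemma} applies directly to $Y$, yielding some $2 \leq a \leq d+1$ for which $Y$ is strictly $a$-rainbow, which is exactly what the theorem demands.

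I do not anticipate a real obstacle, since the combinatorial/model-theoretic content has been packaged into Theorem~\ref{IndiscRegular2} and the geometric/algebraic content (culminating in an appeal to Hilbert's basis theorem to rule out an infinite strictly descending chain of varieties) has been packaged into Lemma~\ref{RainbowLemma}. The only conceptual point worth flagging is the passage from $\mathbb{R}$ to $\mathbb{C}$: one cannot invoke Theorem~\ref{IndiscRegular2} directly over the real field, whose first-order theory is unstable, but this costs nothing because $\Delta_d$ was designed to be a quantifier-free set of polynomial conditions, so indiscernibility obtained in $\mathbb{C}$ transfers back to $\mathbb{R}$ unchanged.
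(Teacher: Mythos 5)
Your proposal matches the paper's proof exactly: the paper likewise passes to $\mathbb{C}^d$, applies Theorem~\ref{IndiscRegular2} with $\Delta = \Delta_d$ to extract a $\Delta_d$-indiscernible $Y \subseteq X$ of size $\kappa$, and concludes via Lemma~\ref{RainbowLemma}. The remark about needing the stable theory $ACF_0$ in place of the unstable real field is also the paper's own justification for the setup.
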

\begin{proof}
By Theorem~\ref{IndiscRegular2} we can choose $Y \subseteq X$ of size $\kappa$ such that $Y$ is $\Delta_d$-indiscernible; then we conclude by Lemma~\ref{RainbowLemma}.
\end{proof}

For singular cardinals we do not have such an explicit conclusion, although we can say something about what the configurations look like:

\begin{theorem} \label{SingularRainbow}
Suppose $X \subseteq \mathbb{R}^d$, and $E$ is an equivalence relation on $X$ with infinitely many classes, each class infinite. Suppose that considered as a subset of $\mathbb{C}^d$, we have that $X$ is $(\Delta_d, E)$-indiscernible. Then there is $2 \leq a_* \leq d+1$ such that each $E$-equivalence class is strictly $a_*$-rainbow, and $X$ is strongly $a$-rainbow for all $a \leq a_*$.
\end{theorem}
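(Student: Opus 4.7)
I proceed in two stages. First, I extract a single integer $a_*$ by applying Lemma~\ref{RainbowLemma} inside each $E$-class. Second, I lift the strict $a_*$-rainbowness of each individual class to strong $a$-rainbowness of $X$ for $a \le a_*$, exploiting the cross-class indiscernibility.

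For stage one: fix any $E$-class $C$. All pairs of distinct elements of $C$ are $E$-equivalent, so $(\Delta_d, E)$-indiscernibility of $X$ specializes to $\Delta_d$-indiscernibility of $C$ viewed inside $\mathbb{C}^d$. Lemma~\ref{RainbowLemma} then produces some $2 \le a_C \le d+1$ such that $C$ is strictly $a_C$-rainbow. Next, I invoke $(\Delta_d, E)$-indiscernibility once more, this time comparing an $a_C$-tuple inside $C$ with an $a_C$-tuple inside another class $C'$: both tuples realize the same ``all-equivalent'' $E$-pattern, hence have the same $\Delta_d$-type, and this forces $a_C = a_{C'}$. I denote the common value $a_*$; the first conclusion of the theorem follows.

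For stage two: strong $a$-rainbowness of $X$ is the conjunction of $q_a(\bar v) \ne 0$ on every $a$-subset and $p_a(\bar v, \bar w) \ne 0$ on every pair of distinct $a$-subsets. Since $q_a, p_a \in \Delta_d$, $(\Delta_d, E)$-indiscernibility reduces both conditions to statements depending only on the $E$-pattern of the tuples involved, so only finitely many patterns need to be checked. When the pattern is concentrated in a single class, strict $a_*$-rainbowness of that class (just established) handles both properties. For multi-class patterns, I plan to adapt the descending-variety argument from the proof of Lemma~\ref{RainbowLemma}: if a polynomial identity held on every tuple of a given multi-class pattern $P$, then fixing all but one coordinate and varying that coordinate through its (infinite) class would yield a descending chain of prevarieties in $\mathbb{C}^d$ that, once shown to be strictly descending, violates Hilbert's basis theorem.

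The main obstacle is securing strict descent in the multi-class case. In the proof of Lemma~\ref{RainbowLemma}, the varying coordinate ranges over the whole indiscernible set, which in $ACF_0$ behaves like a Morley sequence and is therefore Zariski dense in the ambient space along the relevant coordinate; here, by contrast, the varying coordinate only moves through a single $E$-class $C$ lying inside a proper hyperplane $H_C$ of dimension $a_*-1$. The right density statement to verify is therefore that $C$ is Zariski dense in $H_C$---an $\omega$-stability fact, since $C$ is an infinite $\Delta_d$-indiscernible subset of $\mathbb{C}^d$ and so is (up to passing to a suitable subset absorbed by indiscernibility) a Morley sequence in a stationary type whose generic locus is precisely $H_C$. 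With this in place, the descending-variety computation transports verbatim from Lemma~\ref{RainbowLemma} to the multi-class setting and delivers the required contradiction.
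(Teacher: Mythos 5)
Your stage one is fine and matches the paper exactly: restricting $(\Delta_d,E)$-indiscernibility to a single class gives $\Delta_d$-indiscernibility of that class, Lemma~\ref{RainbowLemma} produces $a_C$, and cross-class indiscernibility forces all the $a_C$ to coincide. The gap is in stage two. The density statement you propose to verify --- that an $E$-class $C$ is Zariski dense in its affine hull $H_C$ --- is false in general and does not follow from $\Delta_d$-indiscernibility or from $\omega$-stability. For instance, take $d=2$ and a ``generic'' countable subset $C$ of a circle: every triple is non-collinear and all distances and all areas are pairwise distinct, so $C$ is $\Delta_2$-indiscernible with $a_*=3$ and $H_C=\mathbb{C}^2$, yet the Zariski closure of $C$ is the circle, a proper subvariety of $H_C$. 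Local $\Delta_d$-indiscernibility does not make $C$ a Morley sequence, and even for a genuine Morley sequence the relevant locus is the locus of its type, which can sit strictly inside the affine hull. Consequently the chain of prevarieties you get by letting one coordinate range over $C$ can stabilize at a proper subvariety of $H_C$ with no contradiction, and the descending-variety computation does not ``transport verbatim.''

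What is missing is a mechanism that forces strict descent, and the paper's mechanism is combinatorial rather than geometric. The paper first normalizes the bad pair $(\overline{u},\overline{v})$ so that the two $a$-tuples agree off a block $I=\{k,\dots,a-1\}$ of positions whose entries all lie in a single $E$-class disjoint from the classes of the other positions (with $k>0$ because each class is strictly $a_*$-rainbow), and then chooses the bad pair so as to \emph{minimize the number of $E$-classes met by $\overline{u}$}. The descending chain lives in $\mathbb{C}^{2d(a-k)}$, with the block positions as the variables, and the $\ell$-th condition is obtained by substituting elements of \emph{fresh} $E$-classes into the positions outside $I$. Membership of the test point in $A_\ell$ then follows from $(\Delta_d,E)$-indiscernibility, while non-membership in $A_{\ell+1}$ follows because a witness to membership would yield a bad pair meeting strictly fewer $E$-classes, contradicting minimality. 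This minimality device (together with introducing new classes at each stage so that the conditions genuinely accumulate) is the essential idea your proposal lacks; without it, or some substitute for it, the strictness of your chain cannot be established.
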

\begin{proof}
There is some $2 \leq a_* d+1$ such that each $E$-equivalence class is strictly $a_*$-rainbow, by Lemma~\ref{RainbowLemma}  and $(\Delta_d, E)$-indiscernibility, so we just need to show that $X$ is strongly $a$-rainbow for all $a \leq a_*$.

Suppose not; say $a \leq a_*$ and $u_0, \ldots, u_{a-1}, v_0, \ldots, v_{a-1}$ are $a$-tuples from $X$ with the same volume (possibly $0$). We can suppose $u_{a-1} \not= v_i$ for any $i < a$. We claim we can arrange that $u_i = v_i$ for all $i < a-1$, and that $u_{a-1} E v_{a-1}$. Indeed, choose $w_0, \ldots, w_{a-1}$ distinct elements of $X$ such that $w_i = u_i$ for all $i < a-1$, and $w_{a-1}$ is some new element with $w_{a-1} E u_{a-1}$. By $(\Delta, E)$-indiscernibility, $\overline{v}$ has the same volume as both $\overline{u}$ and $\overline{w}$, so the latter two have the same volume. So replace $\overline{v}$ by $\overline{w}$.

Let $I \subseteq a$ be the set of all $i < a$ with $u_i E u_a$. Then clearly for any $w_0 \ldots w_{a-1}$, if $w_i = u_i$ for all $i < a$ with $i \not \in I$, and if $w_i E u_i$ for all $i \in I$, then $\overline{u}$ and $\overline{w}$ have the same volume. Moreover this holds whenever we replace $u_0, \ldots, u_{a-1}$ by $u'_0, \ldots, u'_{a-1}$, where $u'_i E u'_j$ iff $u_i E u_j$. By reordering we can suppose $I = \{k, k+1, \ldots, a-1\}$, for some $k < a$. Now $k > 0$, since otherwise $\overline{u}, \overline{v}$ are a subset of a single equivalence class, and so this contradicts the choice of $a_*$.

For the contradiction, we suppose we have arranged to have $\{u_i/E: i < a\}$ of minimum size.

Choose elements $u_i^\ell: i < a, \ell < \omega$ as follows:  having defined $u_i^\ell$ for each $\ell < \ell_*$, let $u_i^{\ell_*}: i < k$ be some new elements such that  $u_i^{\ell_*} E u_j^{\ell_*}$ iff $u_i E u_j$, and for $i < k$, $u_i^{\ell_*}$ is not $E$-related to any previous  $u_j^\ell$ or any $u_j$.

For each $\ell < \omega$ let $A_\ell \subseteq \mathbb{C}^{2\cdot d \cdot (a-k)}$ be the pre-variety of all $(v_k, \ldots, v_{a-1}, w_k, \ldots, w_{a-1})$ such that for all $\ell' < \ell$, $p_a(u^{\ell'}_0, \ldots, u^{\ell'}_{k-1}, v_k, \ldots, v_{a-1}, u^{\ell'}_0, \ldots, u^{\ell'}_{k-1}, w_k, \ldots, w_{a-1}) = 0$ (for tuples in $\mathbb{R}^{d \cdot (a-k)}$ recall this is equivalent to saying that $u^{\ell'}_0, \ldots, u^{\ell'}_{k-1}, v_k, \ldots, v_{a-1}$ and \\$u^{\ell'}_0, \ldots, u^{\ell'}_{k-1}, w_k, \ldots, w_{a-1}$ have the same volume). 

This is a descending chain of pre-varieties; but it must also be strict: for let $\ell < \omega$, and let $v_k, \ldots, v_{a-1}, w_k, \ldots, w_{a-1}$ be new elements with each $v_i, w_j E u^\ell_0$. Since $\{u_i/E: i < a\}$ was chosen of minimal size we must have that  $(v_k, \ldots, v_{a-1}, w_k, \ldots, w_{a-1}) \in A_{\ell} \backslash A_{\ell+1}$. But this contradicts Hilbert's Basis theorem. 
\end{proof}

Note that as a special case we have recovered \Erdosns's Theorem~\ref{ErdosTheorem}: whenever $X \subseteq \mathbb{R}^d$ is infinite, there is a $2$-rainbow $Y \subseteq X$ with $|Y| = |X|$. This is because we must have $a_* \geq 2$.

\section{Perfect subsets of $\mathbb{R}^d$ have rainbow perfect subsets}\label{PSPSection}
This section is in ZF+DC.

In this section we show that if $X\subseteq \R^d$ is perfect, then there is some perfect $Y\subseteq X$ which is
strictly $a$-rainbow for some $a\le d+1$. Since $Y$ is perfect we get
$|Y|=|X|$.

\begin{definition}
We make several definitions.

\begin{enumerate}
\item Suppose $P$ is a Polish space. A coloring $f:P \into [c]$ has the {\it Baire property} if, for all $i\in [i]$, $f^{-1}(i)$ has the Baire property.

\item
If $x,y\in 2^\omega$ then 
$\Delta(x,y)=\min \{ i \st x(i)\ne y(i)\}$.

\item if $u \in \binom{2^\omega}{a}$, writing $u = \{x_1, \ldots, x_a\}$ in lexicographically increasing order (as always), then say that $u$ is \emph{skew} if for all $1 \leq i < j < a$, $\Delta(x_i, x_{i+1}) \not= \Delta(x_j, x_{j+1})$. Let $\binom{2^\omega}{a}_{\mbox{skew}}$ be the set of all $u \in \binom{2^\omega}{a}$ which are skew.
\item
We define $f_*: \binom{2^\omega}{a}_{\mbox{skew}} \to LO([a-1])$ as follows, where $LO([a-1])$ is the set of linear orders of $a-1$ (of which there are $(a-1)!$). Namely let $f_*(x_0,\ldots,x_{a-1})$ be the linear ordering $<$ of $a-1$ given by: $i < j$ iff $\Delta(x_i, x_{i+1}) < \Delta(x_j, x_{j+1})$. Here we are writing $u = \{x_0, \ldots, x_{a-1}\}$ in increasing lexicographic order.

\item A perfect subtree of $2^{<\omega}$ is a subtree $T$ of $2^{<\omega}$ (nonempty and closed under initial segments) such that for every $s \in T$ there are $t_0, t_1 \in T$ with $s \subset t_0, t_1$ and such that $t_0$ and $t_1$ are incompatible. Note that the set of branches $[T]$ through $T$ is a perfect subset of $2^\omega$, and this characterizes the perfect subsets of $2^\omega$. Perfect subsets of $2^\omega$ are also called Cantor sets. We say that a Cantor set $C$ is \emph{skew} if for all $x \not= y$, $x' \not= y'$ elements from $C$, if $\Delta(x, y) = \Delta(x', y')$ then $\{x, y\} = \{x', y'\}$. In particular $\binom{C}{a} \subseteq \binom{2^\omega}{a}_{\mbox{skew}}$ for each $a$.
\end{enumerate}
\end{definition}

The following is due to Blass \cite{Blass}. 

\begin{theorem}\label{CanRamseyForCantorSpace}
Suppose $a, c$ are natural numbers and $f:\binom{2^\omega}{a} \into [c]$ has the Baire property. Then there exists a skew Cantor set $C \subseteq 2^\omega$ so that for all $u, v \in \binom{C}{a}$, if $f_*(u) = f_*(v)$ then $f(u) = f(v)$. Thus $f \restriction_{\binom{C}{a}}$ takes on only $(a-1)!$ values, and in fact there are only $c^{(a-1)!}$ possibilities for $f \restriction_{\binom{C}{a}}$.
\end{theorem}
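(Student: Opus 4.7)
The plan is to prove Theorem~\ref{CanRamseyForCantorSpace} by a fusion construction, building $C$ as the branches of a skew perfect tree $T \subseteq 2^{<\omega}$ and using the Baire property of each $f^{-1}(j)$ to pin down the value of $f$ on each skeleton type stage by stage. A skew perfect tree is conveniently parameterized by a system $\{t_s : s \in 2^{<\omega}\}$ of nodes with $t_s \smallfrown \langle i\rangle \preceq t_{s \smallfrown \langle i\rangle}$ for $i \in \{0,1\}$, and with the splitting lengths $|t_s|$ pairwise distinct at each level. The branches $x_s = \bigcup_n t_{s \restriction n}$ then form a skew Cantor set $C = [T]$, and each $u \in \binom{C}{a}$ has a well-defined skeleton order $f_*(u) \in LO([a-1])$ recording the order in which the consecutive pairs of branches of $u$ first diverge.

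Since each $f^{-1}(j)$ has the Baire property and $[c]$ is discrete, there is a meager set $M \subseteq (2^\omega)^a$ off which $f$ is continuous. The fusion constructs $t^n_s$ for $s \in 2^n$ in countably many stages. At stage $n+1$, I enumerate the finitely many pieces of skeleton data at this level --- each being a choice of $a$ distinct top nodes $s^{(0)}, \ldots, s^{(a-1)} \in 2^{n+1}$ together with a linear order $\sigma \in LO([a-1])$ consistent with the splittings already arranged. For each datum, the set $B$ of tuples $(x^{(0)}, \ldots, x^{(a-1)})$ with $x^{(i)} \in [t^{n+1}_{s^{(i)}}]$ and $f_*$-type $\sigma$ is a nonempty open subset of $(2^\omega)^a$; since $B \setminus M$ is nonempty and $f$ is continuous there, some nonempty open piece of $B$ misses $M$ and sits inside a single level set $f^{-1}(c)$, and we further extend the top nodes to shrink into that piece. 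At the same time, we ensure the new splitting lengths are pairwise distinct and cylinder diameters tend to zero.

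To guarantee that the pinned color depends only on $\sigma$ and not on the particular top nodes, we run the fusion in parallel for each of the $c^{(a-1)!}$ candidate assignments $h: LO([a-1]) \to [c]$, demanding at each step that the color pinned for the datum $(s^{(0)}, \ldots, s^{(a-1)}, \sigma)$ equals $h(\sigma)$; at least one candidate succeeds, because a Baire-generic $a$-tuple of $f_*$-type $\sigma$ gets a canonical value depending only on $\sigma$, by the 0-1 law for the natural action of the binary-tree automorphism group on the set of skew tuples of type $\sigma$. The main obstacle is the fusion bookkeeping --- simultaneously maintaining skewness, avoidance of $M$, color stabilization for every skeleton pattern, and shrinking diameters --- which is handled by processing the finitely many requirements at each stage in sequence and exploiting that each commitment (shrinking a cylinder) is preserved under all further extensions of the top nodes. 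The resulting $C = [T]$ is a skew Cantor set with $f(u) = h(f_*(u))$ on $\binom{C}{a}$, giving at most $(a-1)!$ values in the range and at most $c^{(a-1)!}$ possibilities for $f \restriction_{\binom{C}{a}}$.
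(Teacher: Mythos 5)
The paper does not actually prove this theorem; it is quoted from Blass, whose proof reduces a Baire-measurable colouring to a continuous one and then invokes the Halpern--L\"auchli theorem (equivalently, Milliken's strong-subtree theorem) to homogenize the induced colouring of node configurations. Your proposal correctly carries out the soft half of that argument: using the Baire property to pass to a comeager set on which $f$ is continuous, and setting up a fusion along a skew tree in which the colour of an $a$-tuple of branches is pinned, after shrinking, by a finite configuration of top nodes. The gap is in the step that makes the pinned colour depend only on $\sigma \in LO([a-1])$. The ``$0$--$1$ law'' you invoke is false: already for $a=1$ it would assert that every Baire-measurable $f: 2^\omega \to [c]$ is constant on a comeager set, which $f(x)=x(0)$ refutes; and for $a\geq 2$, automorphisms of $2^{<\omega}$ preserve levels and hence each $\Delta(x_i,x_{i+1})$, so the sets of skew tuples of type $\sigma$ with prescribed $\Delta$-values are nonempty, invariant, open and non-dense, and the action is nowhere near generically ergodic. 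The continuous colouring $f(\{x,y\}) = \Delta(x,y) \bmod 2$ is a concrete witness.

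That same example shows the parallel fusion cannot be repaired by shrinking alone. Take $a=2$, $c=2$, and the parity colouring: once the top nodes $t_s$, $s\in 2^{n+1}$, are fixed, the colour of every pair of branches lying below two distinct top nodes is already frozen --- it is the parity of the level of the meet of those top nodes --- and in a skew tree the $2^{n+1}-1$ meet-levels are pairwise distinct and will in general realize both parities. No further extension of the top nodes changes any of these frozen colours, so neither constant candidate $h$ survives past stage $2$, even though the theorem is true for this colouring (choose splitting nodes all at levels of one parity). The fix necessarily involves \emph{discarding} splitting nodes, i.e.\ passing to a strong subtree on which the induced colouring of node configurations is canonical. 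For $a=2$ this is an easy pigeonhole along the fusion, but for $a\geq 3$ the induced colouring lives on tuples of nodes spread across several levels, and homogenizing it is precisely the content of Milliken's theorem and of Halpern--L\"auchli. That combinatorial ingredient is the heart of Blass's proof, and your argument neither supplies it nor cites it; genericity considerations cannot substitute for it.
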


A set $X \subseteq \mathbb{R}^d$ has the perfect set property if $X$ is either countable or else has a perfect subset. This is a regularity property of subsets of $\mathbb{R}^d$, and so holds for all reasonably definable subsets. For instance, every analytic set has the 
perfect set property: see Theorem 12.2 of \cite{Kanamori}. Also, assuming 
sufficient large cardinals (for instance, infinitely many Woodin cardinals with a measurable cardinal above),
all projective subsets of $\mathbb{R}^d$ have this property; see Theorem 32.14 of \cite{Kanamori}.

Moreover, if we let $PSP$ denote the assertion that every subset of $\mathbb{R}^d$ has the perfect set property, then $ZF + DC+ PSP$ is consistent relative to an inaccessible cardinal (this is part of Solovay's theorem, see Theorem 11.11 of \cite{Kanamori}).

\begin{theorem}\label{th:perfect}
Suppose $P \subseteq \mathbb{R}^d$ is perfect. 
Then there is a perfect set $Q \subseteq X$ and some 
$2 \leq a \leq d+1$ such that $Q$ is strictly $a$-rainbow.
\end{theorem}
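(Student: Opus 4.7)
The plan is to combine Blass's canonical Ramsey theorem (Theorem~\ref{CanRamseyForCantorSpace}) with an algebraic-geometric argument modelled on Lemma~\ref{RainbowLemma}.

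First I would use that every perfect subset of a Polish space contains a continuous injective copy of $2^\omega$ to fix a continuous injection $\phi: 2^\omega \to P$; it then suffices to produce a skew Cantor set $C \subseteq 2^\omega$ for which $\phi[C]$ is strictly $a$-rainbow for some $2 \le a \le d+1$, and then take $Q := \phi[C]$. I would package the relevant algebraic data into a single Borel coloring $c: \binom{2^\omega}{2(d+1)} \to K$ into a finite set $K$: for each $(2d+2)$-set $s$, $c(s)$ records, for every $2 \le a \le d+1$, every $a$-subset $u \subseteq s$, and every pair $(u,v)$ of $a$-subsets of $s$ (indexed also by their overlap type), whether $q_a(\phi(u)) = 0$ and whether $p_a(\phi(u), \phi(v)) = 0$ in the notation of Definition~\ref{CayleyMenger}. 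Applying Theorem~\ref{CanRamseyForCantorSpace} to $c$ yields a skew Cantor set $C_0 \subseteq 2^\omega$ on which $c$ factors through $f_*$, so that each of these finitely many vanishing questions depends only on the $f_*$-type of the relevant sub-configuration.

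Next I would let $a^* \le d+1$ be the largest $a$ such that some $a$-subset of $\phi[C_0]$ is affinely independent, so that $\phi[C_0]$ lies in its $(a^*-1)$-dimensional affine span $H$ (because every $(a^*+1)$-subset is affinely dependent). It then remains to establish, in analogy with the two steps inside Lemma~\ref{RainbowLemma}: (a) every $a'$-subset of $\phi[C_0]$ is affinely independent for each $2 \le a' \le a^*$, i.e.\ $q_{a'}(\phi(u)) \ne 0$ for all $u \in \binom{C_0}{a'}$; and (b) no two distinct $a'$-subsets of $\phi[C_0]$ yield the same volume, for each such $a'$.

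The hard part will be upgrading Blass's canonicity (values determined by $f_*$-type) to genuine uniformity (values constant across $f_*$-types), which is what Lemma~\ref{RainbowLemma} assumes. For (a), suppose $q_{a'}$ vanishes on all $\tau$-type $a'$-subsets for some $f_*$-type $\tau$. Fixing a short initial tuple $(x_0, \ldots, x_k)$ in $C_0$ and letting the remaining entries range over the completions in $C_0$ yielding $\tau$-type subsets, the vanishing of $q_{a'}$ confines the $\phi$-images of these completions to a proper algebraic subvariety through $\phi(x_0), \ldots, \phi(x_k)$; iterating over choices of initial tuple and intersecting the resulting constraints via Hilbert's basis theorem (as in Lemma~\ref{RainbowLemma}) forces $\phi[C_0]$ into a proper affine subspace of $H$, contradicting the maximality of $a^*$. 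For (b), the descending-chain-of-prevarieties argument from the last paragraph of the proof of Lemma~\ref{RainbowLemma} transcribes directly, applied inside a single $f_*$-type. The most delicate point throughout is verifying that the set of $\tau$-type completions of a fixed initial tuple in $C_0$ is geometrically rich enough (not merely combinatorially) for these algebraic arguments to run; this will use the perfectness and skewness of $C_0$ together with the continuity of $\phi$.
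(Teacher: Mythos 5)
Your proposal follows essentially the same route as the paper's proof: a continuous injection of $2^\omega$ into $P$, a finite Borel coloring of $2a$-element sets recording which of the polynomials $q_{a'}$ and $p_{a'}$ vanish, Blass's theorem to extract a skew Cantor set $C$ on which these answers depend only on the $f_*$-type, and then a strictly descending chain of prevarieties contradicting Hilbert's basis theorem to rule out volume coincidences within a single type. The ``delicate point'' you defer at the end is resolved in the paper by the observation that if $N$ exceeds every splitting level $\Delta(z_i,z_j)$ of a configuration $\overline{z}$ from $C$, then replacing $z_{i_*}$ by any $z \in C$ with $z\restriction_N = z_{i_*}\restriction_N$ preserves the $f_*$-type, so an entire subcone of $C$ (an infinite, indeed perfect, set of completions) supplies both the infinitely many witnesses needed to make the chain strictly descending and, in the degeneracy step, a whole cone mapping into a lower-dimensional hyperplane.
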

\begin{proof}
 It is not hard to find a continuous injection $\rho: 2^\omega \to P$, such that the image of $\rho$ is closed. Note that if $C \subseteq 2^\omega$ is a Cantor set then $\rho[C]$ is perfect.

Define $f: \binom{2^\omega}{2a} \to [c]$ (for $c$ large) so that $f(u)$ codes the following information: for each $a' \leq a$, and for each $I, J \in \binom{2a}{a'}$, whether or not the volume of $\rho[u_I]$ is equal to $0$, and whether or not the volume of $\rho[u_I]$ is equal to the volume of $\rho[u_J]$. Clearly we can choose $f$ to have the Baire property (in fact, its graph will be Borel).

Let $C \subseteq 2^\omega$ be a skew Cantor set as in Theorem~\ref{CanRamseyForCantorSpace}. It suffices to show that there some $a \leq d+1$ such that $\rho[C]$ is strictly $a$-rainbow.

Suppose for some $a \leq d+1$ and for some $x_0<_{lex} \ldots <_{lex} x_{a-1}$ from $C$, the volume of $(\rho(x_0), \ldots, \rho(x_{a-1}))$ is equal to $0$. Choose $n$ large enough so that $n > \Delta(x_i, x_{i+1})$ for all $i < a-1$. Then whenever $x \restriction_n = \rho(x_0) \restriction_n$, we get that the volume of $(\rho(x), \rho(x_1), \ldots, \rho(x_{a-1}))$ is equal to $0$. Hence $\{\rho(x): x \in C, x \restriction_n = x_0 \restriction_n\}$ is contained in an $a-2$-dimensional hyperplane; hence whenever $u = \{y_i: i \leq a\} \in \binom{C}{a}$ is such that each $y_i \restriction_n = \rho(x_0) \restriction_n$, we get that $\rho[u]$ has volume $0$. Since every element of $\binom{C}{a}$ has the same type as some such $u$, we get that for all $u \in \binom{C}{a}$, $\rho[u]$ has volume $0$, and so $\rho[C]$ is contained in an $a-2$-dimensional hyperplane.

Let $a$ be largest so that this fails; thus $\rho[C]$ is a subset of an $a-1$-dimensional hyperplane, but for each $a' \leq a$ and for each $u \in \binom{C}{a}$, $\rho[u]$ has nonzero volume. We claim that for each $a' \leq a$, $\rho[C]$ is $a'$-rainbow (and hence strongly $a$'-rainbow).

Suppose not, say $a' \leq a$ and $x_0 <_{lex} \ldots <_{lex} x_{a'-1}$ and $y_{0} <_{lex} \ldots <_{lex} y_{a'-1}$ witness this, so $\{x_i: i < a'\} \not= \{y_i: i <a'\}$ and yet their images under $\rho$ have the same $a'-1$-ary volume. Let $N$ be large enough such that $N > \Delta(x_i, x_j)$ and $N > \Delta(y_i, y_j)$ for all $i < j < a'$, and whenever $x_i \not= y_j$ then $N > \Delta(x_i, y_j)$. Now choose $i_* < a'$ such that $x_{i_*} \not \in \{y_i: i < a'\}$; then for any $x$ with $x \restriction_N = x_{i_*} \restriction_N$ we have that the $a'$-ary volume of $\rho[x_0, \ldots, x_{i_*-1}, x_{i_*}, x_{i_{*}+1}, \ldots, x_{a'-1}]$ is equal to the $a'-1$-ary volume of $\rho[x_0, \ldots, x_{i_*-1}, x, x_{i_*+1}, \ldots, x_{a'-1}]$, both being equal to the $a'-1$-ary volume of $y_0, \ldots, y_{a'-1}$. 

 Given $\overline{z}$ with $f_*(\overline{z}) = f_*(\overline{x})$, let $N_{\overline{z}}$ be the maximum of $\Delta(z_i, z_j)+1: i < j < a'$. Let the cone above $\overline{z}$, $\mathcal{C}_{\overline{z}}$, be all $z$ such that $z \restriction_{N_{\overline{z}}} = z_{i_*} \restriction_{N_{\overline{z}}}$. Then for any $z \in \mathcal{C}_{\overline{z}}$, $\rho[\overline{z}]$ has the same $a'$-ary volume as $\rho[\overline{z} \backslash \{z_{i_*}\} \cup \{z\}]$. 

Recall that $q_{a'}(v_0, \ldots, v_{a'-1}, w_0, \ldots, w_{a'-1})$ is a polynomial (each $v_i, w_j$ is a tuple of $d$-variables) such that given $\alpha_0, \ldots, \alpha_{a'-1}, \beta_0, \ldots, \beta_{a'-1}$ from $\mathbb{R}^d$, $\overline{\alpha}$ and $\overline{\beta}$ have the same $a'-1$-ary volume iff $q_{a'}(\overline{\alpha}, \overline{\beta}) = 0$; given tuples $\overline{\alpha}, \overline{\beta}$ from $\mathbb{C}^d$, then we define them to have the same $a'-1$-ary volume if $q_{a'}(\overline{\alpha}, \overline{\beta})= 0$.

Inductively choose $\overline{x}^n: n < \omega$ so that each $\overline{x}^n = x^n_0 <_{lex} \ldots <_{lex} x^n_{a'-1}$ has $f_*(\overline{x}^n) = f_*(\overline{x})$, and each $\overline{x}^{n+1} \subseteq \mathcal{C}_{\overline{x}_n}$. For each $n$, let $V_n$ be the set of all $\alpha \in \mathbb{C}^d$ such that for each $m \leq n$, the $a'-1$-ary volume of $\rho[\overline{x}^m]$ is equal to the $a'-1$-ary volume of $\rho[\overline{x}^m \backslash \{x^m_{i_*}\}] \cup \{\alpha\}$. This is a descending chain of prevarieties, so to get a contradiction it suffices to show that $V_{n+1} \subsetneq V_n$. But choose $i \not= i_*$; then $x^{n+1}_i \in V_n$, but  $\rho[\overline{x}^{n+1} \backslash \{x^{n+1}_{i_*}\}] \cup \{\rho(x^{n+1}_{i})\}$ is a degenerate simplex, so has $a'-1$-ary volume zero, so $\rho(x^{n+1}_i) \not \in V_{n+1}$. 
\end{proof}

From Theorem~\ref{th:perfect} and the comments proceeding it we
obtain the following:

\begin{corollary}\label{co:darling}~
\begin{enumerate}
\item
Suppose $X \subseteq \mathbb{R}^d$ is analytic and uncountable.
Then there is a perfect set $Q \subseteq X$ and some 
$2 \leq a \leq d+1$ such that $Q$ is strictly $a$-rainbow.
\item
Assume sufficient large cardinals.
Suppose $X \subseteq \mathbb{R}^d$ is projective and uncountable.
Then there is a perfect set $Q \subseteq X$ and some 
$2 \leq a \leq d+1$ such that $Q$ is strictly $a$-rainbow.
\item
Assume PSP.
Suppose $X \subseteq \mathbb{R}^d$ is uncountable.
Then there is a perfect set $Q \subseteq X$ and some 
$2 \leq a \leq d+1$ such that $Q$ is strictly $a$-rainbow.
\end{enumerate}
\end{corollary}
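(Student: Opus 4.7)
The proof is essentially a direct combination of Theorem~\ref{th:perfect} with the perfect set property in each of the three hypothesized settings. In all three parts, the plan is: start from $X \subseteq \mathbb{R}^d$ uncountable, extract a perfect subset $P \subseteq X$ using the appropriate version of the perfect set property, and then apply Theorem~\ref{th:perfect} to $P$ to obtain a perfect $Q \subseteq P \subseteq X$ and some $2 \leq a \leq d+1$ with $Q$ strictly $a$-rainbow.

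For part (1), I would cite the classical fact (Theorem 12.2 of \cite{Kanamori}, already mentioned in the paragraph preceding the corollary) that every uncountable analytic subset of a Polish space contains a perfect subset. Applying this to $X \subseteq \mathbb{R}^d$ produces the required $P$, and Theorem~\ref{th:perfect} finishes the argument. For part (2), I would invoke the generalization (Theorem 32.14 of \cite{Kanamori}) that, under sufficient large cardinal hypotheses (for instance, infinitely many Woodin cardinals with a measurable above), all projective sets have the perfect set property; the same two-step argument then applies. For part (3), PSP gives the perfect subset directly by definition, after which Theorem~\ref{th:perfect} again supplies the rainbow perfect subset.

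There is essentially no obstacle here, since all of the real work has been done in Theorem~\ref{th:perfect} and in the cited descriptive-set-theoretic results. The only minor point to mention is that $|Q|=|X|=2^{\aleph_0}$, which follows from $Q$ being perfect; this is consistent with the framework of Section~\ref{PSPSection} (ZF + DC), as perfect subsets of Polish spaces have cardinality $2^{\aleph_0}$ in ZF + DC. Thus the proof reduces to: \emph{invoke perfect set property, then invoke Theorem~\ref{th:perfect}}, stated once for each of the three hypotheses.
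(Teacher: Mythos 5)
Your proposal is correct and matches the paper exactly: the corollary is stated immediately after the remark that analytic sets, projective sets (under large cardinals), and all sets (under PSP) have the perfect set property, and the paper derives it precisely by extracting a perfect subset and applying Theorem~\ref{th:perfect}. No further comment is needed.
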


\section{A model of set theory where an uncountable set of reals has no uncountable $2$-rainbow subset}\label{independence}

In this section we prove it is consistent with $ZF$ that there is an uncountable set of reals without an uncountable $2$-rainbow subset. Thus some amount of choice is necessary. The proof is a standard symmetric models argument; for a source on this, see \cite{Jech}, Chapter 15. 

We attempted to prove consistency over $ZF + DC$, but could not, so we leave the following as an open question:

\vspace{2 mm}

\noindent \textbf{Question.} Is it consistent with $ZF + DC$ that there is an uncountable set of reals without an uncountable $2$-rainbow subset?

\begin{theorem}
Suppose $\mathbb{V} \models ZFC$. Then there is a symmetric submodel $M$ of a forcing extension $\mathbb{V}[G]$ of $\mathbb{V}$, such that $M \models ZF + $ there is an uncountable set of reals with no uncountable $2$-rainbow subset. 
\end{theorem}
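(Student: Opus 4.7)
The plan is to adapt the classical Fraenkel--Mostowski ``pair atoms'' permutation model to $ZF$ via a symmetric-submodel construction over $\mathbb{V}$, in the style of Jech, Chapter~15. Fix a positive real $d$ in $\mathbb{V}$. Force with the finite-support product $\mathbb{P} = \prod_{\alpha < \omega_1} \mathbb{Q}_\alpha$, where each $\mathbb{Q}_\alpha$ generically adjoins an unordered pair $P_\alpha = \{u_\alpha, v_\alpha\} \subseteq \mathbb{R}$ with $|u_\alpha - v_\alpha| = d$, arranged so that the within-pair swap $u_\alpha \leftrightarrow v_\alpha$ is a genuine forcing automorphism of $\mathbb{Q}_\alpha$. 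Let $\mathcal{G} = S_{\omega_1} \wr S_2$ act on $\mathbb{P}$ by permuting the indices $\alpha$ and swapping within each pair, and let $\mathcal{F}$ be the normal filter generated by the pointwise stabilisers $G_E$ of finite $E \subseteq \omega_1$ (so that $(\sigma,\tau) \in G_E$ requires $\sigma \restriction E = \mathrm{id}$ and the within-pair swap at each $\alpha \in E$ to be trivial). Let $M \subseteq \mathbb{V}[G]$ be the corresponding symmetric submodel and set $X = \bigcup_{\alpha < \omega_1} P_\alpha$.

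The easy verifications are routine. First, $X \in M$ because its canonical name is $\mathcal{G}$-invariant. Second, $X$ is uncountable in $M$: a hypothetical injection $f\colon \omega \to X$ in $M$ has a hereditarily $\mathcal{F}$-symmetric name $\dot f$ of some finite support $E$; pick $\alpha \notin E$ with some element of $P_\alpha$ in the range of $f$, witnessed by some $p \in G$ forcing $\dot f(\check n) = \dot u_\alpha$, and then apply a transposition in $G_E$ swapping $\alpha$ with a fresh $\beta \notin E \cup \mathrm{supp}(p)$ to produce $p \cup \sigma(p)$ compatible but forcing $f(n)$ to take two distinct real values, a contradiction. The heart of the argument is the claim that for every uncountable $Y \subseteq X$ in $M$, with hereditarily symmetric name $\dot Y$ of support $E$, and for every $\alpha \notin E$, the whole pair $P_\alpha$ lies in $Y$ or is disjoint from $Y$. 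The within-pair swap $\tau_\alpha \in G_E$ fixes $\dot Y$ and interchanges $u_\alpha$ with $v_\alpha$; provided $\mathbb{Q}_\alpha$ really does not allow symmetric names to single out one of $u_\alpha, v_\alpha$, one concludes $u_\alpha \in Y \iff v_\alpha \in Y$. Thus $Y$ contains uncountably many complete pairs, any two of which give two distinct $2$-subsets of $Y$ at the common distance $d$, so $Y$ is not $2$-rainbow.

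The main technical challenge is the careful design of $\mathbb{Q}_\alpha$. In a naive Cohen realisation where $\mathbb{Q}_\alpha$ adds $c_\alpha$ Cohen and $P_\alpha = \{c_\alpha, c_\alpha + d\}$, the ambient order on $\mathbb{R}$ supplies a fully symmetric definition that picks a representative per pair: the set $X \cap [0,d)$ sits in $M$ (since $X \in M$ and $[0,d) \in \mathbb{V} \subseteq M$), has one point from each pair, and has Cohen-generic (hence distinct) pairwise distances, defeating the pair-respecting step. The paper's reference to Jech, Chapter~15 points to the standard fix: use an ``unordered-pair'' forcing in which the two pair elements are genuinely not individually nameable (for instance a suitable quotient of Cohen forcing by the swap involution), or equivalently, push the Fraenkel--Mostowski pair permutation model through the Jech--Sochor transfer theorem. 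Setting this bookkeeping up so that the name-level symmetry $\tau_\alpha(\dot Y) = \dot Y$ genuinely forces pair-respecting interpretations is where the proof does its real work.
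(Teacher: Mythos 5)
There is a fatal gap at the step you yourself flag and then defer to ``bookkeeping'': no choice of forcing $\mathbb{Q}_\alpha$ can make the two elements of a pair of \emph{reals} indiscernible, because the linear order on $\mathbb{R}$ is absolutely definable. Generically, two points of $X$ lie in the same pair iff their distance is exactly $d$ (no cross-pair distance equals $d$), so the partition into pairs is definable from $X$ inside $M$; hence $Y_{\min} = \{\min P_\alpha : \alpha < \omega_1\}$ and $Y_{\max} = \{\max P_\alpha : \alpha < \omega_1\}$ both lie in $M$, with empty support. Since $X = Y_{\min} \cup Y_{\max}$ is uncountable, one of the two is uncountable, each meets every pair in exactly one point, and each is (generically) $2$-rainbow, since the distances between minima of distinct mutually generic pairs are all distinct. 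This directly refutes your key claim that every uncountable symmetric $Y \subseteq X$ respects the pairs. At the name level the failure shows up as follows: the swap $\tau_\alpha$ does carry a condition $p$ forcing $\dot u_\alpha \in \dot Y$ to a condition forcing $\dot v_\alpha \in \dot Y$, but when $\dot Y$ is the name for $Y_{\min}$, the condition $p$ already decides which of $\dot u_\alpha, \dot v_\alpha$ is smaller, and $\tau_\alpha(p)$ decides it the other way, so the two conditions are incompatible and no contradiction results. Quotienting $\mathbb{Q}_\alpha$ by the swap involution does not help (the problem is not the names but the order on $\mathbb{R}$), and Jech--Sochor is of no use either: it transfers the pair permutation model to a symmetric model in which the atoms become sets of high rank, not elements of $\mathbb{R}$, whereas the theorem requires the bad set to consist of reals.

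The paper avoids this trap by replacing indiscernible \emph{pairs} with \emph{countable clusters}: it adds Cohen reals $a_n \subseteq \omega$ and takes $A = \{a_n \mathbin{\triangle} s : n < \omega,\ s \subseteq \omega \text{ finite}\}$, a countable union of countable clusters that is uncountable in $M$ precisely because countable choice fails there. A selector picking one point per cluster only yields a countable set, so the objection above has no force. The repeated distances then come from \emph{within} a single cluster: the symmetry argument shows that an uncountable $B \subseteq A$ must contain $a_{N} \mathbin{\triangle} s$ for all finite $s$ disjoint from some $N'$, and the four points with $s \in \{\emptyset, \{N'\}, \{N'+1\}, \{N', N'+1\}\}$ realize the distance $2^{-(N'+1)}$ twice. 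If you want to salvage a pair-style idea, you must build the repetition out of the additive structure of cluster representatives rather than out of an indiscernibility that $\mathbb{R}$ cannot support.
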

\begin{proof}
We identify $x \in 2^\omega$ with the element of $[0, 1]$ with binary expansion given by $x$. (The collisions do not matter.)

Let $P$ be the forcing notion of all finite partial functions from $\omega \times \omega \to 2$. Then forcing by $P$ adds a Cohen-generic $\dot{f} \in (2^{\omega \times \omega})$.

For each $n < \omega$, let $\dot{a}_n$ be a $P$-name for $\{m < \omega: \dot{f}(n, m) = 1\}$, so each $0_P \Vdash \dot{a}_n \subseteq \omega$. For each $s \subset \omega$ finite let $\dot{a}_{n, s}$ be a $P$-name for $\dot{a}_n \Delta s$ (the symmetric difference). Let $\dot{A}$ be a $P$-name for $\{\dot{a}_{n, s}: n < \omega, s \subset \omega \mbox{ finite}\}$.

Let $\mathcal{G}$ be the group of all permutations $\sigma$ of $\omega \times \omega \times 2$ such that: there is a permutation $\sigma_{0}$ of $\omega$, such that for all $(n, m, i)$, $\sigma(n, m, i) = (\sigma_0(n), m, j)$ for some $j$ (thus we have a map $\sigma \mapsto \sigma_0$). Each $\sigma \in \mathcal{G}$ induces an automorphism of $P$ and hence of its Boolean completion $\mathcal{B}$, which we identify with $\sigma$. Let $\mathcal{F}$ be the filter of subgroups on $\mathcal{G}$, generated by $\mbox{Fix}(\dot{a}_n)$ for each $n < \omega$, along with $\mbox{Fix}(\dot{A})$. Note that $\sigma \in \mbox{Fix}(\dot{a}_n)$ iff $\sigma \restriction_{\{n\} \times \omega \times 2}$ is the identity, and $\sigma \in \mbox{Fix}(\dot{A})$ iff for each $n < \omega$, there are only finitely many $m$ with $\sigma(n, m, 0) \not= (\sigma_0(n), m, 0)$. 

Let $\mathbb{V}[G]$ be a forcing extension by $P$, and let $M$ be the symmetric submodel determined by $\mathcal{F}, \mathcal{G}$, that is $M$ is the set of all $\dot{a}^{\mathbb{V}[G]}$, for $\dot{a}$ a $P$-name such that $\mbox{Fix}(\dot{a}) \in \mathcal{F}$ (as computed in $\mathbb{V}$), and moreover this holds hereditarily. $M$ is a model of $ZF$; see Chapter 15 of \cite{Jech}. Now $\dot{A}^{\mathbb{V}[G]} \in M$ since $\mbox{Fix}(\dot{A}) \in \mathcal{F}$ by construction and this also holds for each $\dot{a} \in \dot{A}$. We claim that $M, \dot{A}^{\mathbb{V}[G]}$ works.

It is well known that $\dot{A}^{\mathbb{V}[G]}$ is uncountable in $\mathbb{V}[G]$: if we look at the larger model $N := \mathbb{V}(\{\dot{a}_n^{\mathbb{V}[G]}: n \in \omega)$ then this is the standard example, due to Cohen, of a model of ZF where choice fails; $\{\dot{a}_n^{\mathbb{V}[G]}: n \in \omega\}$ is an uncountable set and in fact it has no countable subset. See Chapter 14 of \cite{Jech}. So the larger set $\dot{A}$ must still be uncountable in the smaller model $M$.

Suppose towards a contradiction that in $M$, $\dot{A}^{\mathbb{V}[G]}$ had an uncountable $2$-rainbow subset. Then we can choose some hereditarily $\mathcal{F}$-symmetric $P$-name $\dot{B}$ and some $p \in P$ such that $p$ forces: $\dot{B}$ is an uncountable $2$-rainbow subset of $\dot{A}$. We can choose $N$ large enough so that $\mbox{dom}(p) \subset N \times \omega$ and, for every $\sigma \in \mathcal{G}$, if $\sigma \restriction_{N \times \omega \times 2}$ is the identity and if $\sigma \in \mbox{Fix}(\dot{A})$, then $\sigma(\dot{B}) = \dot{B}$.

For each $n < \omega$ and $s \subset \omega$ finite let $Q_{n}$ be the set of all $q \leq p$ such that $q$ forces: $\dot{B} \cap \{\dot{a}_{n, s}: s \subset \omega \mbox{ finite}\} \not= \emptyset$. 

We claim that each $Q_n$ is nonempty, i.e. $p$ does not force that $\dot{B} \cap \{\dot{a}_{n, s}: s \subset \omega \mbox{ finite}\}$ is empty. For suppose it did; then for every $n' \geq N$, $p$ forces that $\dot{B} \cap \{\dot{a}_{n', s}: s \subset \omega \mbox{ finite}\}$ is empty (by considering $\sigma \in \mathcal{G}$ that fix the second and third coordinates and interchange $n, n'$). But then $p$ would force that $\dot{B} \subseteq \{\dot{a}_{n, s}: n < N, s \subset \omega \mbox{ finite}\}$, a countable set.

We claim that for each $n \geq N$, and for each $q \in Q_n$, we have that $q \restriction_{N \times \omega \times 2} \in Q_n$. Indeed, given some $q'$ such that $q' \restriction_{N \times \omega\times 2} = q \restriction_{N \times \omega \times 2}$ it is not hard to find some $\sigma \in \mbox{Fix}(\dot{A})$ with $\sigma_0$ the identity and with $\sigma \restriction_{N \times \omega \times 2}$ the identity, and with $\sigma(q)$ compatible with $q'$. Since $\sigma$ fixes $\dot{B} \cap \{\dot{a}_{n, s}: s \subset \omega \mbox{ finite}\}$, and since $q$ forces this set to be nonempty, $\sigma(q)$ does as well; hence so does $\sigma(q) \cup q'$. We have shown that $Q_n$ is dense below $q \restriction_{N \times \omega}$; hence $q \restriction_{N \times \omega} \in Q_n$.

Thus we can choose $q \in Q_N$ with support contained in $N \times \omega$. (By symmetry again, we see that actually $q \in Q_n$ for all $n \geq N$, from which it follows that $p \in Q_n$ for all $n$, but we won't need this.) For each $s \subset \omega$ finite let $R_{N, s}$ be the set of all $r \leq q$ such that $r$ forces $\dot{a}_{N, s} \in \dot{B}$. For each $s \subset \omega$ finite let $\sigma^s \in \mathcal{G}$ be the permutation defined by: $\sigma^s(n', m, i) = (n',m, i)$ unless $n' = n$ and $m \in s$, in which case $\sigma^s(n', m, i) = (n', m, 1-i)$. Note that if $r \in R_{N, s}$ then for all $t \subset \omega$ finite, $\sigma^t(r) \in R_{N, t \triangle s}$ (because $\sigma^t(\dot{a}_{N, s})  = \dot{a}_{N, t \triangle s}$).

Thus, since some $R_{N, s}$ must be nonempty, we get that they all must be nonempty. Choose $r \in R_{N, \emptyset}$. Write $r = r_0 \sqcup r_1$ where $r_1 = r \restriction_{\{N\} \times \omega \times 2}$. Choose $N'$ large enough so that $\mbox{dom}(r_1) \subseteq \{N\} \times N' \times 2$. Then for every $s \subset \omega$ finite with $s \cap N' = \emptyset$, we have that $\sigma^s(r) = r \in R_{N, s}$. Thus for every $s \subset \omega$ finite with $s \cap N' = \emptyset$, we have that $r \Vdash \dot{a}_{N, s} \in \dot{B}$. But this is a contradiction: let $s_0 = \emptyset$, let $s_1 = \{N'\}$, let $t_0 = \{N'+1\}$, let $t_1 = \{N', N'+1\}$. $r$ forces that each $s_i, t_j \in \dot{B}$, but $0_P$ forces that the distance from $\dot{a}_{N,s_0}$ to $\dot{a}_{N,s_1}$ is $\frac{1}{2^{N'+1}}$, as is the distance from $\dot{a}_{N,t_0}$ to $\dot{a}_{N,t_1}$.
\end{proof}

\section{Acknowledgements}

We want to thank Carolyn Gasarch whose skepticism of the axiom of choice
was one of our inspirations. We also want to thank our co-authors on
the original {\it Distinct Volume Sets} paper,
(David Conlon, Jacob Fox, David Harris, and Sam Zbarsky) since that paper
was also one of our inspirations. Finally, we would like to thank Chris Laskowski for greatly improving the exposition of Theorem~\ref{IndiscSingularUncountCof}.


\end{document}